%% filename: amsproc-template.tex
%% version: 1.1
%% date: 2014/07/24
%%
%% American Mathematical Society
%% Technical Support
%% Publications Technical Group
%% 201 Charles Street
%% Providence, RI 02904
%% USA
%% tel: (401) 455-4080
%%      (800) 321-4267 (USA and Canada only)
%% fax: (401) 331-3842
%% email: tech-support@ams.org
%%
%% Copyright 2008-2010, 2014 American Mathematical Society.
%%
%% This work may be distributed and/or modified under the
%% conditions of the LaTeX Project Public License, either version 1.3c
%% of this license or (at your option) any later version.
%% The latest version of this license is in
%%   http://www.latex-project.org/lppl.txt
%% and version 1.3c or later is part of all distributions of LaTeX
%% version 2005/12/01 or later.
%%
%% This work has the LPPL maintenance status `maintained'.
%%
%% The Current Maintainer of this work is the American Mathematical
%% Society.
%%
%% ====================================================================

%     AMS-LaTeX v.2 template for use with amsproc
%
%     Remove any commented or uncommented macros you do not use.

\documentclass{amsproc}

\newtheorem{theorem}{Theorem}[section]
\newtheorem{lemma}[theorem]{Lemma}
\newtheorem{proposition}[theorem]{proposition}

\theoremstyle{definition}
\newtheorem{definition}[theorem]{Definition}

\theoremstyle{remark}
\newtheorem{remark}[theorem]{Remark}

\numberwithin{equation}{section}

\begin{document}

\title[Higher-order regularity of solutions]{Higher-order regularity of solutions to the large scale moist atmosphere system}

%    Remove any unused author tags.

%    author one information
\author{Shenyang Tan}
\address{$^{1}$School of Mathematics and Statistics,
Nanjing University of Information Science
 and Technology, Nanjing 210044, China\\
 $^{2}$Taizhou Institute of Sci. $\&$ Tech. NJUST, Taizhou 225300, China}
\curraddr{}
\email{tanshenyang@njust.edu.cn}
\thanks{}

%    author two information
\author{Wenjun Liu}
\address{School of Mathematics and Statistics,
Nanjing University of Information Science
 and Technology, Nanjing 210044, China}
\curraddr{}
\email{wjliu@nuist.edu.cn(Corresponding author)}
\thanks{This work was supported by the Natural Science Foundation  of China (No. 11771216), the Key Research and Development Program of Jiangsu Province (Social Development) (No.  BE2019725), the Qing Lan Project of Jiangsu Province and
Postgraduate Research and Practice Innovation Program of Jiangsu Province (No. KYCX21\_0930).}

\subjclass[2010]{Primary 35Q35, 35Q86, 35B65.}

\keywords{primitive equations, moist atmosphere, well-posedness.}

\date{}

\dedicatory{}

\begin{abstract}
In this paper, we study the higher-order regularity of solutions to the large scale moist atmosphere system through the way of $p$-strong solutions. On the basis of the well-posedness results of strong solutions, we first improve the regularity of solutions in the vertical direction, and then improve the regularity in the horizontal direction. Thus we obtain the well-posedness of solutions in $H^{2}$ space.
\end{abstract}

\maketitle

\section{Introduction }
In a noninertial coordinate system, under the $(x,y,p)$ coordinates, the system of large-scale moist atmosphere is formed by coupling the primitive equations
\begin{align}
\label{pe1}
&\partial_{t}\textbf{\emph{v}}+(\textbf{\emph{v}}\cdot\nabla)\textbf{\emph{v}}
+\omega\partial_{p}\textbf{\emph{v}}+\nabla\Phi+f \textbf{\emph{v}}^{\bot}+\mathcal{A}_{\textbf{\emph{v}}}\textbf{\emph{v}}=f_{\textbf{\emph{v}}},\\
 \label{pe2}&\partial_{p}\Phi=-\frac{RT}{p},\\
\label{pe3}&\nabla\cdot \textbf{\emph{v}}+\partial_{p}\omega=0,\\
&\partial_{t}T+\textbf{\emph{v}}\cdot\nabla T+\omega\partial_{p}T-\frac{RT}{c_{p}p}\omega+\mathcal{A}_{T}T=f_{T},\label{pe4}
\end{align}
and the conservation equation of water vapor in the air
\begin{align}\label{qe}
\partial_{t}q+\textbf{\emph{v}}\cdot\nabla q+\omega\partial_{p}q+\mathcal{A}_{q}q=f_{q}.
\end{align}
Here the unknown functions are $\textbf{\emph{v}},\omega,T,q$, where $\textbf{\emph{v}}=(v_{1},v_{2})$ is the horizontal velocity vector, $\textbf{\emph{v}}^{\bot}=(-v_{2},v_{1})$, $\omega$ is the vertical velocity under $(x,y,p)$ coordinates, $\nabla=(\partial_{x},\partial_{y})$, $T$ is the temperature, $\Phi$ is the geopotential, $f$ is the Coriolis force parameter, $R$ is the gas constants for dry air, $c_{p}$ is the specific heat of air at constant pressure, $p$ is the pressure, $f_{\textbf{\emph{v}}},f_{T},f_{q}$ are the source terms, $\mathcal{A}_{v},\mathcal{A}_{T},\mathcal{A}_{q}$ are the viscosity terms and
\begin{align*}
\mathcal{A}_{\ast}=-\mu_{\ast}\Delta-\nu_{\ast}\frac{\partial}{\partial p}\left(\left(\frac{gp}{R\bar{\theta}}\right)^{2}\frac{\partial}{\partial p}\right),\ast=\textbf{\emph{v}},T,q.
\end{align*}

   The primitive equations (\ref{pe1})-(\ref{pe4}) is extensively used in the study of atmosphere science. It was proposed systematically from the mathematical point of view by Lions, Temam and Wang \cite{Lions} in 1992. Since then, a lot of research have been done about this atmosphere dynamic system, see \cite{CaoTiti,CaoTiti2,CaoTiti3,Gao1,Guo3,Ju,Ju2,kukavica1,kukavica2,You,Zhou} and references therein.  The most noteworthy work is that Cao, Titi,
and Wang [2] completely obtained the global well-posedness of strong solution in $H^{1}$ space. On the higher-order regularity of solutions to the primitive equations, Petcu and Wirosoetisno \cite{Petcu1} proved the local existence of very regular solutions, up to $C^{\infty}$-regularity. Then based on the excellent work in \cite{CaoTiti}, Petcu \cite{Petcu2,Temam-ziane} extended the existence time interval from locally to globally.

   As to the moist atmospheric system (\ref{pe1})-(\ref{qe}), it was proposed mathematically by Guo and Huang in \cite{Guo1,Guo2}. They obtained the well-posedness of solutions in $H^{1}$ space. In a series works of Coti Zelati and Temam et al. \cite{Bousquet,Zelati2,Cao1,Zelati,Zelati3,Temam-wu}, they further divided the moist atmosphere into water vapor, rain water and cloud condensates, and introduced the Heaviside function in source terms to describe the transformation between different phases. Hittmeir and Klein et al. \cite{Hittmeir,Hittmeir2017} also studied the multi-phase moist atmospheric system, in which the microphysical factors in source terms were also considered.
   Inspired by the works on moist atmospheric system, in \cite{TanLiu} we also study a multi-phase moist atmospheric system, in which the velocity is dominated by the primitive equations.

In this paper, we focus on the mathematical method to study the $H^{2}$ regularity of solutions to the moist atmospheric system, but we do not consider the
microphysical factors during the phase transition of moisture atmosphere. So we still consider the atmospheric system (\ref{pe1})-(\ref{qe}). We use the so-called $p$-strong solution method to study the $H^{2}$ regularity of solutions to the moist atmospheric system, which is different from the method in \cite{Petcu2,Temam-ziane}.

The rest of this paper is organized as follows.
In Section 2, we give the mathematical formulation of moist atmospheric system and the main results about the well-posedness of $p$-strong and $H^{2}$ solutions. In Section 3, we give the a priori estimates of $p$-strong solutions and the proof of Theorem \ref{theorem1}.
In Section 4, we improve the regularity of solutions in the horizontal direction, and then give the proof of Theorem \ref{theorem2}.

\section{Mathematical formulation and main results}
\subsection{Mathematical formulation}
Reexamining the temperature equation (\ref{pe4}), we can find that the term $\omega RT/c_{p}p$ is a troublesome term. In fact, we can eliminate this term by introducing the potential temperature
\begin{align*}
\theta=T\left(\frac{p_{0}}{p}\right)^{R/c_{p}}-\theta_{h},
\end{align*}
where $\theta_{h}$ is a reference temperature satisfying $\theta_{h},\partial\theta_{h}/\partial p\in L^{\infty}\left((0,t)\times\mathcal{M}\right)$.
Then the equation for $T$ becomes
\begin{align*}
\partial_{t}\theta+\textbf{\emph{v}}\cdot\nabla\theta+\omega\partial_{p}\theta+\mathcal{A}_{\theta}\theta=f_{\theta},
\end{align*}
where
 \begin{align*}
\mathcal{A}_{\theta}=-\mu_{\theta}\Delta-\nu_{\theta}\left(\frac{p_{0}}{p}\right)^{R/c_{p}}
\frac{\partial}{\partial p}\left(\left(\frac{gp}{R\bar{\theta}}\right)^{2}\frac{\partial}{\partial p}\left(\frac{p_{0}}{p}\right)^{R/c_{p}}\right).
\end{align*}

In summary, we mainly consider the following system which is composed of the primitive equations and the humidity equation:
\begin{equation}\label{e1}
\partial_{t}\textbf{\emph{v}}+\mathcal{A}_{\textbf{\emph{v}}}\textbf{\emph{v}}+(\textbf{\emph{v}}\cdot\nabla)\textbf{\emph{v}}+\omega\partial_{p}\textbf{\emph{v}}+\nabla\Phi+f \textbf{\emph{v}}^{\bot}=f_{\textbf{\emph{v}}},
\end{equation}
\begin{equation}\label{e2}
\partial_{p}\Phi+\frac{RT}{p}=0,
\end{equation}
\begin{equation}\label{e3}
\nabla\cdot \textbf{\emph{v}}+\partial_{p}\omega=0,
\end{equation}
\begin{equation}\label{e4}
\partial_{t}\theta+\mathcal{A}_{\theta}\theta+\textbf{\emph{v}}\cdot\nabla\theta+\omega\partial_{p}\theta= f_{\theta},
\end{equation}
\begin{equation}\label{e5}
\partial_{t}q+\mathcal{A}_{q}q+\textbf{\emph{v}}\cdot\nabla q+\omega\partial_{p}q= f_{q}.
\end{equation}

Let $\mathcal{M}'=(0,1)\times (0,1)$ be a rectangular domain in $\mathbb{R}^{2}$, and $\mathcal{M}=\mathcal{M}'\times (p_{0},p_{1})$ be a cylinder in $\mathbb{R}^{3}$ under $(x,y,p)$ coordinate, where $0<p_{0}<p_{1}$. The boundary of $\mathcal{M}$ is composed of $\Gamma_{i}, \Gamma_{u}, \Gamma_{l}$, where
\begin{align*}
\Gamma_{i}&=\{(x,y,p)\in\bar{\mathcal{M}}:p=p_{1}\},\\
\Gamma_{u}&=\{(x,y,p)\in\bar{\mathcal{M}}:p=p_{0}\},\\
\Gamma_{l}&=\{(x,y,p)\in\bar{\mathcal{M}}:(x,y)\in\partial\mathcal{M}',p_{0}\leq p\leq p_{1}\}.
\end{align*}

For mathematical convenience, we consider the following periodic boundary conditions:
$\textbf{\emph{v}},\omega$, $\theta,\zeta,P_{v}$ are periodic in $x,y$ with period 1, and are periodic in $p$ with period $p_{1}-p_{0}$. If we set $\tilde{p}=p-(p_{0}+p_{1})/2$, then $\textbf{\emph{v}}(\tilde{p}),\zeta(\tilde{p})$ are even in $\tilde{p}$. $\omega(\tilde{p}),\theta(\tilde{p})$ are odd in $\tilde{p}$. In addition, we suppose that $\partial_{p}\textbf{\emph{v}}=\omega=0$ when $p=p_{0}=p_{1}$.
The initial conditions are
\begin{align}
\textbf{\emph{v}}(x,y,p,0)=\textbf{\emph{v}}_{0}(x,y,p),\
\theta(x,y,p,0)=\theta_{0}(x,y,p),\
q(x,y,p,0)=q_{0}(x,y,p)\label{initial data3}.
\end{align}
We also assume that the initial data satisfy the same periodicity.

Throughout this paper, for simplicity, we denote by $H^{s}=H^{s}(\mathcal{M})$ the classical Sobolev spaces of order $s$ on $\mathcal{M}$, and by $L^{p}=L^{p}(\mathcal{M}),1\leq p\leq\infty$ the classical $L^{p}$ Lebesgue space with norm $\|\cdot\|_{L^{p}}$. We also use the abbreviation $\|f\|_{w}$ for $\|(gp/R\bar{\theta})f\|_{L^{2}}$.
It is easy to verify that $\|f\|_{w}$ is equivalent to $\|f\|_{L^{2}}$.
Considering the free divergence property of velocity $\textbf{\emph{v}}$, we introduce the space
\begin{align*}
\mathcal{V}&=\left\{\textbf{\emph{v}}\in C^{\infty}(\mathcal{M};\mathbb{R}^{2}):\nabla\cdot\int_{p_{0}}^{p_{1}}
\textbf{\emph{v}}(x,y,p')dp'=0,\textbf{\emph{v}}\ \text{satisfies}\ (\ref{e1})\right\},\\
\mathbb{H}&=\text{The closure of}\ \mathcal{V}\ \text{with respect to the norm of}\ (L^{2})^{2}, \\
\mathbb{V}&=\text{The closure of}\ \mathcal{V}\ \text{with respect to the norm of}\ (H^{1})^{2}.
\end{align*}
\begin{definition}
Let $\textbf{\emph{v}}_{0}\in\mathbb{V},\theta_{0}, q_{0}\in H^{1}$, $f_{\textbf{\emph{v}}},f_{\theta},f_{q}\in H^{1}$. A solution $(\textbf{\emph{v}},\theta,q)$ to equations (\ref{e1})-(\ref{e5}) with the periodic boundary conditions and initial condition (\ref{initial data3}) is called a global strong solution if for any $t_{1}\in (0,\infty)$,
\begin{align*}
&(\textbf{\emph{v}},\theta,q)\in C(0,t_{1};\mathbb{V}\times (H^{1})^{2})\cap L^{2}(0,t_{1}; (H^{2})^{4}),\
\partial_{t}(\textbf{\emph{v}},\theta,q)\in L^{2}(0,t_{1};\mathbb{H}\times (L^{2})^{2}).
\end{align*}
\end{definition}

\begin{definition}
Let $\textbf{\emph{v}}_{0},\partial_{p}\textbf{\emph{v}}_{0}\in\mathbb{V},\theta_{0},\partial_{p}\theta_{0}, q_{0},\partial_{p}q_{0}\in H^{1}$, $\partial_{p}f_{\textbf{\emph{v}}},\partial_{p}f_{\theta},\partial_{p}f_{q}\in L^{2}$. A strong solution $(\textbf{\emph{v}},\theta,q)$ to equations (\ref{e1})-(\ref{e5}) with the periodic boundary conditions and initial condition (\ref{initial data3}) is called a global $p$-strong solution if for any $t_{1}\in (0,\infty)$,
\begin{align*}
\left(\partial_{p}\textbf{\emph{v}},\partial_{p}\theta,\partial_{p}q\right)\in C\left(0,t_{1};\mathbb{V}\times (H^{1})^{2}\right)\cap L^{2}\left(0,t_{1};(H^{2})^{4}\right).
\end{align*}
\end{definition}

We state our main results on the global existence of $p$-strong and $H^{2}$ solutions to the moist atmospheric system as follows.
\begin{theorem}\label{theorem1}
Let $\textbf{v}_{0}, \partial_{p}\textbf{v}_{0}\in\mathbb{V}$, $\theta_{0},\partial_{p}\theta_{0},q_{0},\partial_{p}q_{0}\in H^{1},$ $\partial_{p}f_{\textbf{v}},\partial_{p}f_{\theta},\partial_{p}f_{q}\in L^{2}$. Then equations (\ref{e1})-(\ref{e5}) with the periodic boundary conditions and initial condition (\ref{initial data3}) has a unique global $p$-strong solution for any $t_{1}\in (0,\infty)$.
\end{theorem}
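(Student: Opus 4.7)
The plan is to bootstrap one vertical derivative past the already-established global strong solution. By the $H^{1}$ well-posedness theory of Cao-Titi-Wang and its moist extension by Guo-Huang, we may treat $(\textbf{\emph{v}},\theta,q)$ as a fixed strong solution in $C(0,t_{1};\mathbb{V}\times(H^{1})^{2})\cap L^{2}(0,t_{1};(H^{2})^{4})$ and concentrate on the auxiliary unknowns $V:=\partial_{p}\textbf{\emph{v}}$, $\Theta:=\partial_{p}\theta$, $Q:=\partial_{p}q$. Differentiating (\ref{e1}), (\ref{e4}), (\ref{e5}) in $p$ and invoking the hydrostatic balance (\ref{e2}) together with the incompressibility (\ref{e3}) turns two non-local terms into local ones: the pressure gradient $\partial_{p}\nabla\Phi$ becomes the source $-R\nabla T/p$, while $\partial_{p}\omega$ is replaced by $-\nabla\cdot\textbf{\emph{v}}$. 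The resulting equations for $(V,\Theta,Q)$ retain a convective-diffusive structure analogous to the original system but crucially no longer involve the Lagrange-multiplier-like pressure, which is what makes the vertical direction substantially easier to promote in regularity than an arbitrary horizontal one.

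Next I would carry out two successive a priori estimates on $(V,\Theta,Q)$. The first multiplies each differentiated equation by its own unknown; the transport trilinears $(\textbf{\emph{v}}\cdot\nabla)V\cdot V+\omega\,\partial_{p}V\cdot V$ combine via (\ref{e3}) into a total divergence and vanish after integration by parts (the boundary terms in $p$ disappear since $\omega=0$ at $p=p_{0},p_{1}$), while the residual nonlinearities $(V\cdot\nabla)\textbf{\emph{v}}$, $(\nabla\cdot\textbf{\emph{v}})V$ and the source $R\nabla T/p$ are bounded by H\"older and the 3D Sobolev embedding $H^{1}\hookrightarrow L^{6}$ (via $L^{4}$ interpolation), and closed by Gr\"onwall against the already integrable quantity $\int_{0}^{t_{1}}\|(\textbf{\emph{v}},\theta,q)\|_{H^{2}}^{2}\,dt<\infty$. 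This yields $(V,\Theta,Q)\in L^{\infty}(0,t_{1};L^{2})\cap L^{2}(0,t_{1};H^{1})$. The second stage tests against $-\Delta V-\partial_{p}^{2}V$ (and analogously for $\Theta,Q$), which after integration by parts upgrades the bound to the required $L^{\infty}(0,t_{1};H^{1})\cap L^{2}(0,t_{1};H^{2})$ control on $(V,\Theta,Q)$.

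The main technical obstacle is the $H^{1}$-level estimate of the cubic nonlinear terms, in particular integrals such as $\int(\nabla\cdot\textbf{\emph{v}})|\partial_{p}V|^{2}$ that arise after integration by parts on the $\omega\,\partial_{p}V$ contribution, and their analogues tested against mixed second derivatives. Since $\omega=-\int_{p_{0}}^{p}\nabla\cdot\textbf{\emph{v}}\,dp'$ is only as regular as one horizontal derivative of $\textbf{\emph{v}}$, I would rely on anisotropic Ladyzhenskaya-type splittings that mix $L^{\infty}_{p}L^{2}_{xy}$ and $L^{2}_{p}L^{\infty}_{xy}$ norms, so that the derivative loss falls onto the strong-solution factor whose $L^{2}(0,t_{1};H^{2})$ norm is already controlled, while the remainder is absorbed by the full dissipation via Young's inequality. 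The lower-order commutators between $\partial_{p}$ and the variable-coefficient parts of $\mathcal{A}_{\textbf{\emph{v}}}$, $\mathcal{A}_{\theta}$, $\mathcal{A}_{q}$ (arising from the $(gp/R\bar{\theta})^{2}$ weight) are treated by the same mechanism. Once the a priori estimates are in place, existence of a global $p$-strong solution follows from a Galerkin scheme whose basis respects the prescribed $p$-parities, and uniqueness in the $p$-strong class is obtained by an energy argument on the difference of two such solutions, closed by the $L^{2}(0,t_{1};H^{2})$ regularity just established.
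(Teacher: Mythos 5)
Your proposal is correct and follows essentially the same route as the paper: differentiate (\ref{e1}), (\ref{e4}), (\ref{e5}) in $p$ (using (\ref{e2})--(\ref{e3}) to localize the pressure and $\partial_{p}\omega$ terms), test against $-\Delta\partial_{p}(\cdot)-\partial_{p}^{2}\partial_{p}(\cdot)$, control the critical $\omega$-trilinear terms by the anisotropic Cao--Li--Titi estimate (Lemma \ref{trilinear}), and close with Gr\"onwall against the $L^{2}(0,t_{1};H^{2})$ norm of the strong solution from Lemma \ref{H1 strong existence}. The only cosmetic difference is your preliminary $L^{2}$-level estimate on $(\partial_{p}\textbf{\emph{v}},\partial_{p}\theta,\partial_{p}q)$, which is already subsumed in the strong-solution regularity and which the paper skips.
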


\begin{theorem}\label{theorem2}
Let $\left(\textbf{v}_{0},\theta_{0},q_{0}\right)\in\mathbb{V}\cap(H^{2})^{2}\times (H^{2})^{2}$, $f_{\textbf{v}},f_{\theta},f_{q}\in H^{1}$. Then equations (\ref{e1})-(\ref{e5}) with the periodic boundary conditions and initial condition (\ref{initial data3}) has a unique solution $(\textbf{v},\theta,q)$ satisfying that
\begin{align*}
&\left(\textbf{v},\theta,q\right)\in C\left(0,t_{1};\mathbb{V}\cap(H^{2})^{2}\times (H^{2})^{2}\right)\cap L^{2}\left(0,t_{1};(H^{3})^{4}\right),\nonumber\\
&\left(\partial_{t}\textbf{v},\partial_{t}\theta,\partial_{t}q\right)\in L^{2}\left(0,t_{1};\mathbb{V}\times(H^{1})^{2}\right),
\end{align*}
for any $t_{1}\in (0,\infty)$.
\end{theorem}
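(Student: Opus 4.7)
The plan is to upgrade the $p$-strong solution from Theorem~\ref{theorem1} to a full $H^2$ solution by adding the missing purely horizontal second-derivative control. Theorem~\ref{theorem1} already supplies $\partial_p\mathbf{v},\partial_p\theta,\partial_p q\in L^\infty(0,t_1;H^1)\cap L^2(0,t_1;H^2)$, so every mixed derivative involving at least one $\partial_p$ (in particular $\partial_p^2$ and $\partial_p\nabla$) is automatically in $L^\infty_t L^2\cap L^2_t H^1$. What remains is to show that $\Delta(\mathbf{v},\theta,q)\in L^\infty(0,t_1;L^2)\cap L^2(0,t_1;H^1)$; combining this with the $p$-strong bounds yields the $H^2\cap L^2_t H^3$ regularity in the statement, and the time-derivative bounds then drop out of the equations.

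First I would apply the horizontal Laplacian $\Delta$ to each of (\ref{e1}), (\ref{e4}), (\ref{e5}) and take the $L^2(\mathcal{M})$ inner product with $\Delta\mathbf{v}$, $\Delta\theta$, $\Delta q$ respectively. Using the periodic boundary conditions, the viscous operators $\mathcal{A}_\star$ produce the coercive contributions $\mu_\star\|\nabla\Delta(\cdot)\|_{L^2}^2+\nu_\star\|\partial_p\Delta(\cdot)\|_w^2$, the Coriolis term vanishes by antisymmetry, and the hydrostatic pressure $\nabla\Phi$ is converted via (\ref{e2}) into a term carrying one derivative of $\theta$ less than the worst-case count and is absorbed by the $p$-strong bound. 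Setting $Y(t)=\|\Delta\mathbf{v}\|_{L^2}^2+\|\Delta\theta\|_{L^2}^2+\|\Delta q\|_{L^2}^2$, summing the three identities gives
\begin{equation*}
\frac{d}{dt}Y(t)+c\bigl(\|\nabla\Delta\mathbf{v}\|^2+\|\nabla\Delta\theta\|^2+\|\nabla\Delta q\|^2\bigr)\le \mathcal{N}(t)+\mathcal{F}(t),
\end{equation*}
with $\mathcal{F}$ collecting the straightforward source and hydrostatic pieces and $\mathcal{N}$ the transport nonlinearities.

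The core work is the bound on $\mathcal{N}$. Expanding $\Delta((\mathbf{v}\cdot\nabla)\mathbf{v})$ and $\Delta(\omega\partial_p\mathbf{v})$ by Leibniz and applying H\"older, Ladyzhenskaya and anisotropic Sobolev inequalities in $\mathcal{M}$, each term is split so that one factor sits in $L^\infty_t H^1$ via the first-order bounds of Theorem~\ref{theorem1} and the remaining factors lie in time-integrable norms via the $L^2_t H^2$ bound. A small multiple of $\|\nabla\Delta(\cdot)\|^2$ is absorbed on the left, leaving $\frac{d}{dt}Y\le g(t)(1+Y)$ with $g\in L^1(0,t_1)$, and Gronwall closes the estimate. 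The hard point is the vertical advection $\omega\partial_p\mathbf{v}$: because $\omega=-\int_{p_0}^p\nabla\cdot\mathbf{v}\,dp'$ gains no $p$-smoothing, $\Delta\omega$ contains $\Delta\,\nabla\cdot\mathbf{v}$, which is at the $H^3$ level. One closes it by integrating by parts in $p$, using $\partial_p\omega=-\nabla\cdot\mathbf{v}$ to transfer the $p$-derivative, and then distributing horizontal derivatives so that the \emph{$L^\infty_t H^1$ control of $\partial_p\mathbf{v}$}—which is exactly what Theorem~\ref{theorem1} provides and which is unavailable at the plain strong-solution level—carries the worst factor. This is precisely the leverage the $p$-strong framework was designed to yield.

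Once Gronwall delivers $(\mathbf{v},\theta,q)\in L^\infty(0,t_1;H^2)\cap L^2(0,t_1;H^3)$, the time-derivative bound $(\partial_t\mathbf{v},\partial_t\theta,\partial_t q)\in L^2(0,t_1;\mathbb{V}\times(H^1)^2)$ is read off directly from equations (\ref{e1}), (\ref{e4}), (\ref{e5}), each right-hand-side term now lying in the required space. Continuity in time at the $H^2$ level follows by the Lions--Magenes interpolation lemma, and uniqueness is inherited from the strong-solution uniqueness already obtained in Theorem~\ref{theorem1}.
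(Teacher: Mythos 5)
Your proposal follows essentially the same route as the paper: a horizontal $\Delta$-energy estimate layered on the $p$-strong bounds of Theorem~\ref{theorem1}, with the $\omega\partial_{p}\textbf{v}$ commutator terms closed by the anisotropic trilinear estimate of Lemma~\ref{trilinear} together with the $L^{\infty}_{t}H^{1}\cap L^{2}_{t}H^{2}$ control of $\partial_{p}\textbf{v}$, followed by Gronwall and reading the time regularity directly off the equations. The only cosmetic differences are that the paper tests with $\Delta^{2}$ rather than applying $\Delta$ and testing with $\Delta(\cdot)$, and that it estimates $\textbf{v}$ first and then $\theta,q$ (so $\|\textbf{v}\|_{H^{3}}^{2}$ enters the scalar estimates as an already-known $L^{1}$ Gronwall weight) instead of summing all three energies at once.
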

\begin{remark}
Comparing the result in Theorem \ref{theorem1} with the well-posedness result of strong solution in Lemma \ref{H1 strong existence} below, we can find that if we improve the regularity of initial data and source terms in the $p$ direction, then the regularity of the strong solution of equation (\ref{e1})-(\ref{e5}) in the $p$ direction will also be improved accordingly.
%the strong solution to equations (\ref{e1})-(\ref{e5}) also improve the regularity in the $p$ direction.
Comparing results in Theorem \ref{theorem1} with Theorem \ref{theorem2}, we know that
when we continue to improve the regularity of initial data and source terms of $p$-strong solution in the horizontal direction, the solution will attain the $H^{2}$ regularity.
Based on the idea of $p$-strong solution, we can successively improve the regularity of solutions up to $C^{\infty}$.
\end{remark}
\begin{remark}
Using the a priori estimates in this paper, combining the uniform Gronwall Lemma, through similar arguments as in \cite{Ju,You}, one can obtain the existence of global attractor in $H^{2}$ space.
\end{remark}
\begin{remark}
In this paper, we consider the $H^{2}$-regularity of solutions to the moist atmospheric system with the periodic boundary condition. In fact, combining methods in this paper with the method in \cite{Hittmeir}, we can also deal with the case with physical boundary condition, which is more in line with the actual situation.
\end{remark}
%\begin{remark}
%We try to consider the case with Heaviside function $\mathcal{H}(q-q_{s})$ in the source term to describe the microphysics where $q_{s}$ is the saturation concentration of moist atmosphere. Unfortunately, Dirac function appears when dealing with the derivative of $\mathcal{H}(q-q_{s})$. During the process of the a priori estimates for source terms, due to the screening and scaling properties of Dirac function, the factor $|(\nabla q,\partial_{p}q)|$ appears in the denominator, which makes the estimation difficult to deal with.
%\end{remark}
\section{The well-posedness of $p$-strong solutions }
In this section, we consider the well-posedness of $p$-strong solutions. We first give a well-posedness result about the strong solution to equations (\ref{e1})-(\ref{e5}). Through a similar proof as in \cite{Zelati,Guo2}, we can get the following result:
\begin{lemma}\label{H1 strong existence}
Let $f_{\textbf{v}},f_{\theta},f_{q}\in L^{2}$, $\textbf{v}_{0}\in\mathbb{V},\theta_{0}, q_{0}\in H^{1}$. Then equations (\ref{e1})-(\ref{e5}) associated with the initial data (\ref{initial data3}) has a unique global strong solution $(\textbf{v},\theta,q)$ satisfying that
\begin{align*}
(\textbf{v},\theta,q)\in C(0,t_{1};\mathbb{V}\times (H^{1})^{2})\cap L^{2}(0,t_{1}; (H^{2})^{4}),\ \partial_{t}(\textbf{v},\theta,q)\in L^{2}(0,t_{1};\mathbb{H}\times (L^{2})^{2}),
\end{align*}
for any $t_{1}\in (0,\infty)$.
\end{lemma}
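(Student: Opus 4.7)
The plan is to adapt the Cao--Titi global regularity argument, as carried out by Coti Zelati--Temam \cite{Zelati} and Guo--Huang \cite{Guo2}, to the coupled system (\ref{e1})--(\ref{e5}). First I would set up a Galerkin approximation: expand the unknowns along the eigenfunctions of the Stokes-type operator associated with $\mathcal{A}_{\textbf{\emph{v}}}$ on $\mathbb{V}$ for the velocity, and along the eigenfunctions of $\mathcal{A}_{\theta}$ and $\mathcal{A}_{q}$ for temperature and humidity. The diagnostic variables $\Phi$ and $\omega$ are recovered from $\textbf{\emph{v}}$ and $T$ via (\ref{e2})--(\ref{e3}) together with the boundary conditions $\omega|_{p=p_{0}}=\omega|_{p=p_{1}}=0$ and the divergence-free condition on the barotropic mode of $\textbf{\emph{v}}$. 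After solving the resulting ODE system, the task reduces to obtaining a priori bounds that are uniform in the approximation parameter.

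The main work is to derive those bounds on $[0,t_{1}]$, and I would organize them in four layers. (i) A basic $L^{2}$ energy estimate for $(\textbf{\emph{v}},\theta,q)$, using (\ref{e3}) and the boundary conditions to cancel the cubic transport contributions and produce $L^{\infty}_{t}L^{2}_{x}\cap L^{2}_{t}H^{1}_{x}$ control. (ii) A uniform $L^{\infty}_{t}L^{2}_{x}$ bound for $\partial_{p}\textbf{\emph{v}}$: differentiate (\ref{e1}) in $p$, use (\ref{e2}) to rewrite $\partial_{p}\nabla\Phi=-R\nabla T/p$, and absorb $\int(\partial_{p}\textbf{\emph{v}}\cdot\nabla)\textbf{\emph{v}}\cdot\partial_{p}\textbf{\emph{v}}$ via anisotropic Ladyzhenskaya-type inequalities. (iii) An $L^{\infty}_{t}L^{4}_{x}$, hence $L^{\infty}_{t}H^{1}_{x}$, bound for the barotropic mode $\bar{\textbf{\emph{v}}}=(p_{1}-p_{0})^{-1}\int_{p_{0}}^{p_{1}}\textbf{\emph{v}}\,dp$, which satisfies a forced 2D Navier--Stokes-type equation whose baroclinic source has already been controlled in (ii). (iv) $H^{1}$ bounds for $\theta$ and $q$, obtained by testing against $-\Delta$ and $-\partial_{p}^{2}$ followed by Gronwall; at this stage $\textbf{\emph{v}}$ and $\omega$ are in $L^{2}_{t}H^{2}_{x}$, so $\theta$ and $q$ behave essentially as forced passive scalars. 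Standard compactness via the Aubin--Lions lemma then extracts a subsequence converging to a global strong solution with the stated regularity, and uniqueness follows by testing the equations for the difference of two solutions in $L^{2}$ and invoking Gronwall.

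The principal obstacle, familiar from the pure-atmosphere case, lies in step (iii): closing the $H^{1}$ bound on $\textbf{\emph{v}}$ globally in time. The barotropic pressure gradient is forced by $T$, and the baroclinic mode couples to the vertical stretching $\omega\partial_{p}\textbf{\emph{v}}$ with $\omega$ itself defined by a vertical integral of $\nabla\cdot\textbf{\emph{v}}$; controlling this coupling is precisely the Cao--Titi breakthrough and requires the anisotropic bounds from (ii). Once this is in place, the humidity equation (\ref{e5}) adds nothing qualitatively new: $q$ enters linearly and the velocity field driving it is already under control, so its $H^{1}$ regularity comes from exactly the argument used for $\theta$ in (iv).
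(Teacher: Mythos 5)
First, a point of comparison: the paper does not prove this lemma at all --- it is quoted as known, ``through a similar proof as in \cite{Zelati,Guo2}'' --- so the only meaningful benchmark is the Cao--Titi-type argument actually carried out in those references. Your overall architecture (Galerkin approximation with $\omega$ and $\Phi$ recovered diagnostically, a hierarchy of a priori estimates, Aubin--Lions compactness, $L^2$ uniqueness by Gronwall) is the right one, and your observation that $q$ rides along as a passively transported scalar once $\textbf{\emph{v}}$ is controlled is exactly why the moist system inherits the dry result.

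There is, however, a genuine gap in the content and ordering of your estimate hierarchy. The decisive intermediate estimate in \cite{CaoTiti,Zelati,Guo2} is an $L^{\infty}_{t}L^{6}$ (or $L^{4}$) bound on the \emph{baroclinic fluctuation} $\tilde{v}=v-\bar{v}$, obtained by testing the fluctuation equation (from which the surface pressure has been eliminated) against $|\tilde{v}|^{4}\tilde{v}$; you have instead attached the higher-integrability bound to the barotropic average $\bar{v}$, which only ever needs, and gets, a 2D Navier--Stokes-type $H^{1}$ estimate. More seriously, you place the $\partial_{p}v$ estimate (your step (ii)) \emph{before} these two estimates and assert that it feeds into step (iii). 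As ordered, step (ii) does not close: after testing the $p$-differentiated momentum equation with $\partial_{p}v$ and integrating by parts, the troublesome term $\int_{\mathcal{M}}(\partial_{p}v\cdot\nabla)v\cdot\partial_{p}v\,d\mathcal{M}$ is bounded by $C\|v\|_{L^{6}}\|\partial_{p}v\|_{L^{2}}^{1/2}\|\partial_{p}v\|_{H^{1}}^{3/2}$, and after Young's inequality the Gronwall step requires $\int_{0}^{t_{1}}\|v\|_{H^{1}}^{4}\,ds<\infty$, which the basic energy estimate ($L^{\infty}_{t}L^{2}\cap L^{2}_{t}H^{1}$) does not supply. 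The anisotropic trilinear inequality of Lemma \ref{trilinear} does not rescue this term either, since it is a genuine three-dimensional product of three functions rather than an integral of the form $\int_{\mathcal{M}'}\bigl(\int f\,dp\bigr)\bigl(\int gh\,dp\bigr)$. The estimate closes precisely because one first proves $\tilde{v}\in L^{\infty}_{t}L^{6}$ and $\bar{v}\in L^{\infty}_{t}H^{1}$, hence $v\in L^{\infty}_{t}L^{6}$; that is the actual content of the Cao--Titi breakthrough, and your sketch as written omits it.
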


The following lemma is useful during the a priori estimates.
\begin{lemma}(\cite[proposition 2.2]{CaoTiti6})\label{trilinear}
For any $f,g,h \in H^1$,
\begin{align*}
&\left| \int_{\mathcal{M}'} \int_{p_{0}}^{p_{1}} fdp\int_{p_{0}}^{p_{1}} ghdpdxdy \right|\\
\leq& C \|f\|_{L^2}^{\frac{1}{2}}\left(\|f\|_{L^{2}}^{\frac{1}{2}} + \|\nabla f\|_{L^{2}}^{\frac{1}{2}}\right)
\|g\|_{L^{2}} \|h\|_{L^{2}}^{\frac{1}{2}}\left(\|h\|_{L^{2}}^{\frac{1}{2}}+ \|\nabla h\|_{L^{2}}^{\frac{1}{2}}\right).
\end{align*}
\end{lemma}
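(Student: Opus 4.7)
The plan is to turn the 3D iterated integral into a 2D integral over the horizontal slice $\mathcal{M}'$ by absorbing the $p$-dependence via a pointwise Cauchy--Schwarz, and then to invoke a 2D Ladyzhenskaya/Gagliardo--Nirenberg inequality on the resulting vertical averages. Concretely, set $F(x,y) := \int_{p_0}^{p_1} f\, dp$ and, using Cauchy--Schwarz in $p$, write
\[
\Bigl|\int_{p_0}^{p_1} g h\, dp\Bigr| \leq \tilde g(x,y)\,\tilde h(x,y), \qquad \tilde g := \Bigl(\int_{p_0}^{p_1} g^2\, dp\Bigr)^{1/2}, \quad \tilde h := \Bigl(\int_{p_0}^{p_1} h^2\, dp\Bigr)^{1/2}.
\]
Applying H\"older on $\mathcal{M}'$ with exponents $(4,2,4)$ dominates the original integral by $\|F\|_{L^4(\mathcal{M}')}\|\tilde g\|_{L^2(\mathcal{M}')}\|\tilde h\|_{L^4(\mathcal{M}')}$. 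Fubini gives the clean identity $\|\tilde g\|_{L^2(\mathcal{M}')}=\|g\|_{L^2(\mathcal{M})}$, which already accounts for the central factor on the right-hand side of the lemma.

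For $\|F\|_{L^4(\mathcal{M}')}$ I would use 2D Ladyzhenskaya (valid in the periodic horizontal setting), $\|F\|_{L^4}\leq C\|F\|_{L^2}^{1/2}(\|F\|_{L^2}+\|\nabla F\|_{L^2})^{1/2}$, and control $\|F\|_{L^2(\mathcal{M}')}$ and $\|\nabla F\|_{L^2(\mathcal{M}')}$ by $\|f\|_{L^2(\mathcal{M})}$ and $\|\nabla f\|_{L^2(\mathcal{M})}$ respectively via one more Cauchy--Schwarz in $p$; together with the elementary bound $(a+b)^{1/2}\leq a^{1/2}+b^{1/2}$ this reproduces the $f$-factor. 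The $\tilde h$-factor is more delicate: $\tilde h$ has no obviously $L^2$-controlled gradient. Instead, I would rewrite $\|\tilde h\|_{L^4(\mathcal{M}')}^2=\|\tilde h^2\|_{L^2(\mathcal{M}')}$ and apply the 2D Sobolev embedding $W^{1,1}(\mathcal{M}')\hookrightarrow L^2(\mathcal{M}')$ to $\tilde h^2$. By Fubini, $\|\tilde h^2\|_{L^1(\mathcal{M}')}=\|h\|_{L^2(\mathcal{M})}^2$, while Cauchy--Schwarz in $p$ gives $|\nabla\tilde h^2|\leq 2\tilde h\bigl(\int|\nabla h|^2 dp\bigr)^{1/2}$, so a further Cauchy--Schwarz in $(x,y)$ produces $\|\nabla \tilde h^2\|_{L^1(\mathcal{M}')}\leq 2\|h\|_{L^2(\mathcal{M})}\|\nabla h\|_{L^2(\mathcal{M})}$. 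Collecting these estimates delivers the $h$-factor and completes the proof.

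The main obstacle is precisely this last step: one has to recognise that the right object to bound is $\tilde h^2$ rather than $\tilde h$, and one has to use the \emph{critical} 2D embedding $W^{1,1}\hookrightarrow L^2$, which on the periodic slice requires a mean-subtraction argument (Gagliardo--Nirenberg on the zero-mean part plus the trivial Cauchy--Schwarz bound on the mean). Every other step is a routine combination of Fubini, Cauchy--Schwarz in $p$, and planar Ladyzhenskaya, so the whole argument is short once the $\tilde h^2$ trick is in place.
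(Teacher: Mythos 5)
Your argument is correct, and since the paper states this lemma only by citation to \cite[Proposition 2.2]{CaoTiti6} without reproving it, the relevant comparison is with the proof there, which follows exactly your route: Cauchy--Schwarz in $p$, H\"older with exponents $(4,2,4)$ on $\mathcal{M}'$, planar Ladyzhenskaya for the vertical average of $f$, and the critical $W^{1,1}(\mathcal{M}')\hookrightarrow L^2(\mathcal{M}')$ bound applied to $\tilde h^2$ to extract the factor $\|h\|_{L^2}^{1/2}\bigl(\|h\|_{L^2}^{1/2}+\|\nabla h\|_{L^2}^{1/2}\bigr)$. No changes needed.
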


Let us recall the Minkowski inequality in integral form which is useful in estimating for $\omega$:
Let $\Omega_{1}\in\mathbb{R}^{m_{1}}$ and $\Omega_{2}\in\mathbb{R}^{m_{2}}$ be two measurable sets, where $m_{1}$ and $m_{2}$ are two positive integers. Suppose that $f(x,y)$ is measurable over $\Omega_{1}\times\Omega_{2}$. Then for any $p\geq 1$,
\begin{align}
\left[\int_{\Omega_{1}}\left(\int_{\Omega_{2}}|f(x,y)|d\Omega_{2}\right)^{p}d\Omega_{1}\right]^{\frac{1}{p}} \leq\int_{\Omega_{2}}\left(\int_{\Omega_{1}}|f(x,y)|^{p}d\Omega_{1}\right)^{\frac{1}{p}}d\Omega_{2}.
\end{align}

We start with the a priori estimate for $p$-strong solution $\textbf{\emph{v}}$ to equation (\ref{e1}).
\begin{proposition}\label{pro1}
Let $\textbf{v}_{0}, \partial_{p}\textbf{v}_{0}\in\mathbb{V}$, $\partial_{p}f_{\textbf{v}}\in L^{2}$. Then for any $t_{1}\in (0,\infty)$, the strong solution $\textbf{v}$ to equation (\ref{e1}) satisfies that
\begin{align*}
\partial_{p}\textbf{v}\in L^{\infty}\left(0,t_{1};\mathbb{V}\right)\cap L^{2}\left(0,t_{1};H^{2}\right).
\end{align*}

\end{proposition}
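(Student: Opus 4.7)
The plan is to differentiate equation (\ref{e1}) in $p$ and perform two successive energy estimates on the resulting evolution equation for $\partial_{p}\textbf{\emph{v}}$---first at the $L^{2}$ level, then at the $H^{1}$ level---closing each with Gronwall's inequality against the strong-solution bounds from Lemma \ref{H1 strong existence}. Since $\mathcal{A}_{\textbf{\emph{v}}}$ commutes with $\partial_{p}$ up to smooth first-order corrections coming from the $p$-derivatives of $(gp/R\bar{\theta})^{2}$, and since $\nabla\partial_{p}\Phi=-\nabla(RT/p)$ by the hydrostatic law (\ref{e2}), I obtain an equation of the schematic form
\[
\partial_{t}(\partial_{p}\textbf{\emph{v}})+\mathcal{A}_{\textbf{\emph{v}}}(\partial_{p}\textbf{\emph{v}})+(\textbf{\emph{v}}\cdot\nabla)\partial_{p}\textbf{\emph{v}}+\omega\,\partial_{p}^{2}\textbf{\emph{v}}+(\partial_{p}\textbf{\emph{v}}\cdot\nabla)\textbf{\emph{v}}-(\nabla\cdot\textbf{\emph{v}})\partial_{p}\textbf{\emph{v}}-\nabla(RT/p)+f(\partial_{p}\textbf{\emph{v}})^{\bot}=\partial_{p}f_{\textbf{\emph{v}}}+\ell(\partial_{p}\textbf{\emph{v}}),
\]
where $\ell$ is a smooth first-order operator in $\partial_{p}$ with bounded coefficients.

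For the $L^{2}$ step I would test this equation against $\partial_{p}\textbf{\emph{v}}$. The convection pair $(\textbf{\emph{v}}\cdot\nabla)\partial_{p}\textbf{\emph{v}}+\omega\,\partial_{p}^{2}\textbf{\emph{v}}$ is skew-symmetric by (\ref{e3}) together with the periodicity/vanishing of $\omega$ at $p=p_{0},p_{1}$, the Coriolis contribution vanishes, and the coercivity of $\mathcal{A}_{\textbf{\emph{v}}}$ dominates $\ell$. The genuinely nonlinear remainders
\[
\Bigl|\int(\partial_{p}\textbf{\emph{v}}\cdot\nabla)\textbf{\emph{v}}\cdot\partial_{p}\textbf{\emph{v}}\Bigr|+\Bigl|\int(\nabla\cdot\textbf{\emph{v}})|\partial_{p}\textbf{\emph{v}}|^{2}\Bigr|\le C\|\partial_{p}\textbf{\emph{v}}\|_{L^{2}}^{1/2}\|\nabla\partial_{p}\textbf{\emph{v}}\|_{L^{2}}^{3/2}\|\nabla\textbf{\emph{v}}\|_{L^{2}}
\]
are absorbed into the dissipation through Young's inequality, while the pressure source $\|\nabla(RT/p)\|_{L^{2}}$ is controlled by the $L^{\infty}(0,t_{1};L^{2})$ bound on $\nabla\theta$ furnished by Lemma \ref{H1 strong existence}. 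Gronwall with multiplier $\|\nabla\textbf{\emph{v}}\|_{L^{2}}^{4}$, which is integrable on $[0,t_{1}]$, then yields $\partial_{p}\textbf{\emph{v}}\in L^{\infty}(0,t_{1};L^{2})\cap L^{2}(0,t_{1};H^{1})$.

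For the $H^{1}$ step I would test the same equation against $\mathcal{A}_{\textbf{\emph{v}}}\partial_{p}\textbf{\emph{v}}$. The left-hand side produces $\tfrac{d}{dt}$ of a quantity equivalent to $\|\nabla\partial_{p}\textbf{\emph{v}}\|_{L^{2}}^{2}+\|\partial_{p}^{2}\textbf{\emph{v}}\|_{w}^{2}$, together with $\|\mathcal{A}_{\textbf{\emph{v}}}\partial_{p}\textbf{\emph{v}}\|_{L^{2}}^{2}$, which by elliptic regularity controls $\|\partial_{p}\textbf{\emph{v}}\|_{H^{2}}$. The dangerous trilinear terms, schematically $\int(\textbf{\emph{v}}\cdot\nabla)\partial_{p}\textbf{\emph{v}}\cdot\mathcal{A}_{\textbf{\emph{v}}}\partial_{p}\textbf{\emph{v}}$, $\int\omega\,\partial_{p}^{2}\textbf{\emph{v}}\cdot\mathcal{A}_{\textbf{\emph{v}}}\partial_{p}\textbf{\emph{v}}$ and $\int(\partial_{p}\textbf{\emph{v}}\cdot\nabla)\textbf{\emph{v}}\cdot\mathcal{A}_{\textbf{\emph{v}}}\partial_{p}\textbf{\emph{v}}$, I would control by Lemma \ref{trilinear}, after using Minkowski's integral inequality to convert $\omega=-\int_{p_{0}}^{p}\nabla\cdot\textbf{\emph{v}}\,dp'$ into an anisotropic norm of $\nabla\textbf{\emph{v}}$. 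Absorbing the top-order factors into $\|\mathcal{A}_{\textbf{\emph{v}}}\partial_{p}\textbf{\emph{v}}\|_{L^{2}}^{2}$ and invoking Gronwall once more, now driven by the $L^{2}(H^{1})$ bound on $\partial_{p}\textbf{\emph{v}}$ just obtained together with the strong-solution bounds on $\textbf{\emph{v}}$, will yield $\partial_{p}\textbf{\emph{v}}\in L^{\infty}(0,t_{1};\mathbb{V})\cap L^{2}(0,t_{1};H^{2})$; membership in $\mathbb{V}$ is preserved because $\partial_{p}$ commutes with the vertically-averaged divergence constraint built into $\mathcal{V}$.

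The principal obstacle lies in this $H^{1}$-level estimate: the cubic terms $(\partial_{p}\textbf{\emph{v}}\cdot\nabla)\textbf{\emph{v}}$ and $\omega\,\partial_{p}^{2}\textbf{\emph{v}}$ involve factors of $\partial_{p}\textbf{\emph{v}}$ that are not yet known to lie in $L^{\infty}(0,t_{1};H^{1})$, so a naive H\"older split will fail to close. The remedy is to redistribute derivatives carefully via the anisotropic trilinear inequality of Lemma \ref{trilinear} and to exploit the nonlocal vertical-integral representation of $\omega$ so that only horizontal Lebesgue norms of $\nabla\textbf{\emph{v}}$ appear, which are integrable in time by Lemma \ref{H1 strong existence}.
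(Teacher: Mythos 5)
Your proposal follows essentially the same route as the paper: differentiate (\ref{e1}) in $p$, perform an $H^{1}$-level energy estimate by testing against a second-order elliptic multiplier (the paper uses $-(\Delta\partial_{p}\textbf{\emph{v}}+\partial_{p}^{3}\textbf{\emph{v}})$, you use $\mathcal{A}_{\textbf{\emph{v}}}\partial_{p}\textbf{\emph{v}}$, which is equivalent up to the weight), handle $\omega\partial_{p}^{2}\textbf{\emph{v}}$ via Lemma \ref{trilinear} together with the vertical-integral representation of $\omega$, and close with Gronwall against the bounds of Lemma \ref{H1 strong existence}. Your preliminary $L^{2}$ step is harmless but redundant, since $\partial_{p}\textbf{\emph{v}}\in L^{\infty}(0,t_{1};L^{2})\cap L^{2}(0,t_{1};H^{1})$ is already contained in the strong-solution regularity $\textbf{\emph{v}}\in L^{\infty}(0,t_{1};H^{1})\cap L^{2}(0,t_{1};H^{2})$ furnished by Lemma \ref{H1 strong existence}.
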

\begin{proof}
 Differentiating equation (\ref{e1}) with respect to $p$, we have
\begin{align*}
\partial_{t}\textbf{\emph{v}}_{p}+(\textbf{\emph{v}}_{p}\cdot\nabla)\textbf{\emph{v}}
+(\textbf{\emph{v}}\cdot\nabla)\textbf{\emph{v}}_{p}+(\nabla\cdot\textbf{\emph{v}})\textbf{\emph{v}}_{p}
+\omega\partial_{p}\textbf{\emph{v}}_{p}+f\textbf{\emph{v}}_{p}^{\bot}-
\frac{R}{p}\nabla T+\partial_{p}(\mathcal{A}_{\textbf{\emph{v}}}\textbf{\emph{v}})=\partial_{p}f_{\textbf{\emph{v}}},
\end{align*}
where we have denoted $\partial_{p}\textbf{\emph{v}}$ by $\textbf{\emph{v}}_{p}$ for simplicity. Taking the inner product of  the above equation with $-(\Delta\textbf{\emph{v}}_{p}+\partial_{p}^{2}\textbf{\emph{v}}_{p})$ in $L^{2}$ space, we can get that
\begin{align*}
&-\int_{\mathcal{M}}\partial_{t}\textbf{\emph{v}}_{p}\cdot(\Delta\textbf{\emph{v}}_{p}+\partial_{p}^{2}\textbf{\emph{v}}_{p})d\mathcal{M}
-\int_{\mathcal{M}}\partial_{p}(\mathcal{A}_{\textbf{\emph{v}}}\textbf{\emph{v}})\cdot(\Delta\textbf{\emph{v}}_{p}+\partial_{p}^{2}\textbf{\emph{v}}_{p})d\mathcal{M}\nonumber\\
=&\int_{\mathcal{M}}[(\textbf{\emph{v}}_{p}\cdot\nabla)\textbf{\emph{v}}+(\nabla\cdot\textbf{\emph{v}})\textbf{\emph{v}}_{p}]\cdot(\Delta\textbf{\emph{v}}_{p}+\partial_{p}^{2}\textbf{\emph{v}}_{p})d\mathcal{M}
+\int_{\mathcal{M}}(\textbf{\emph{v}}\cdot\nabla)\textbf{\emph{v}}_{p}\cdot(\Delta\textbf{\emph{v}}_{p}+\partial_{p}^{2}\textbf{\emph{v}}_{p})d\mathcal{M}
+\\
&\int_{\mathcal{M}}\omega\partial_{p}\textbf{\emph{v}}_{p}\cdot(\Delta\textbf{\emph{v}}_{p}+\partial_{p}^{2}\textbf{\emph{v}}_{p})d\mathcal{M}
+\int_{\mathcal{M}}(f\textbf{\emph{v}}_{p}^{\bot}-\frac{R}{p}\nabla T-\partial_{p}f_{\textbf{\emph{v}}})\cdot(\Delta\textbf{\emph{v}}_{p}+\partial_{p}^{2}\textbf{\emph{v}}_{p})d\mathcal{M}.
\end{align*}
It follows from integration by parts that
\begin{align*}
-\int_{\mathcal{M}}\partial_{t}\textbf{\emph{v}}_{p}\cdot(\Delta\textbf{\emph{v}}_{p}+\partial_{p}^{2}\textbf{\emph{v}}_{p})d\mathcal{M}
=\frac{1}{2}\frac{d}{dt}(\|\nabla\textbf{\emph{v}}_{p}\|_{L^{2}}^{2}
+\|\partial_{p}\textbf{\emph{v}}_{p}\|_{L^{2}}^{2}),
\end{align*}
and
\begin{align*}
&-\int_{\mathcal{M}}\partial_{p}(\mathcal{A}_{\textbf{\emph{v}}}\textbf{\emph{v}})\cdot(\Delta\textbf{\emph{v}}_{p}+\partial_{p}^{2}\textbf{\emph{v}}_{p})d\mathcal{M}\nonumber\\
=&\mu_{v}\|\Delta\textbf{\emph{v}}_{p}\|_{L^{2}}^{2}+\nu_{v}\|\nabla\partial_{p}\textbf{\emph{v}}_{p}\|_{w}^{2}
+2\nu_{v}\int_{\mathcal{M}}\left(\frac{g}{R\bar{\theta}}\right)^{2}p\nabla\textbf{\emph{v}}_{p}\cdot\nabla\partial_{p}\textbf{\emph{v}}_{p}d\mathcal{M}
+\mu_{v}\|\nabla\partial_{p}\textbf{\emph{v}}_{p}\|_{L^{2}}^{2}\nonumber\\
&+\nu_{v}\|\partial_{p}^{2}\textbf{\emph{v}}_{p}\|_{w}^{2}
+2\nu_{v}\int_{\mathcal{M}}\left(\frac{g}{R\bar{\theta}}\right)^{2}p\partial_{p}\textbf{\emph{v}}_{p}\cdot\partial_{p}^{2}\textbf{\emph{v}}_{p}d\mathcal{M}
+\nu_{v}\int_{\mathcal{M}}\left(\frac{g}{R\bar{\theta}}\right)^{2}\textbf{\emph{v}}_{p}\cdot\partial_{p}^{2}\textbf{\emph{v}}_{p}d\mathcal{M}\nonumber\\
\geq&\frac{3\mu_{1}}{4}\|\textbf{\emph{v}}_{p}\|_{H^{2}}^{2}
-C\left(\|\nabla\textbf{\emph{v}}_{p}\|_{L^{2}}^{2}
+\|\partial_{p}\textbf{\emph{v}}_{p}\|_{L^{2}}^{2}+\|\textbf{\emph{v}}_{p}\|_{L^{2}}^{2}\right),
\end{align*}
where $\mu_{1}=\rm{\min}\{\mu_{\emph{v}},\nu_{\emph{v}}\}$ and we have used H\"older's inequality and Young's inequality in the last step.
Utilizing the H\"older inequality, we have
\begin{align*}
&\int_{\mathcal{M}}(\textbf{\emph{v}}_{p}\cdot\nabla)\textbf{\emph{v}}\cdot\Delta\textbf{\emph{v}}_{p}d\mathcal{M}
+\int_{\mathcal{M}}(\nabla\cdot\textbf{\emph{v}})\textbf{\emph{v}}_{p}\cdot\Delta\textbf{\emph{v}}_{p}d\mathcal{M}
\nonumber\\
\leq &C\int_{\mathcal{M}}|\textbf{\emph{v}}_{p}||\nabla\textbf{\emph{v}}||\Delta\textbf{\emph{v}}_{p}|d\mathcal{M}
\leq C\|\nabla\textbf{\emph{v}}\|_{L^{4}}\|\textbf{\emph{v}}_{p}\|_{L^{4}}\|\Delta\textbf{\emph{v}}_{p}\|_{L^{2}}\nonumber\\
\leq& C\|\nabla\textbf{\emph{v}}\|_{L^{2}}^{\frac{1}{4}}\|\nabla\textbf{\emph{v}}\|_{H^{1}}^{\frac{3}{4}}\|\textbf{\emph{v}}_{p}\|_{L^{2}}^{\frac{1}{4}}
\|\textbf{\emph{v}}_{p}\|_{H^{1}}^{\frac{3}{4}}\|\Delta\textbf{\emph{v}}_{p}\|_{L^{2}}
\leq C\|\textbf{\emph{v}}\|_{H^{1}}^{\frac{1}{2}}\|\textbf{\emph{v}}\|_{H^{2}}^{\frac{3}{4}}
\|\textbf{\emph{v}}_{p}\|_{H^{1}}^{\frac{3}{4}}\|\Delta\textbf{\emph{v}}_{p}\|_{L^{2}}\nonumber\\
\leq& C\|\textbf{\emph{v}}\|_{H^{1}}\|\textbf{\emph{v}}\|_{H^{2}}^{\frac{3}{2}}
\|\textbf{\emph{v}}_{p}\|_{H^{1}}^{\frac{3}{2}}+\frac{\mu_{v}}{32}\|\Delta\textbf{\emph{v}}_{p}\|_{L^{2}}^{2}
\leq C\|\textbf{\emph{v}}\|_{H^{2}}^{2}
\|\textbf{\emph{v}}_{p}\|_{H^{1}}^{2}+\|\textbf{\emph{v}}\|_{H^{1}}^{4}+\frac{\mu_{1}}{32}\|\textbf{\emph{v}}_{p}\|_{H^{2}}^{2},
\end{align*}
 where we have used the Gagliardo-Nirenberg-Sobolev inequality in the third step.
 Similarly,
\begin{align*}
\int_{\mathcal{M}}\left[(\textbf{\emph{v}}_{p}\cdot\nabla)\textbf{\emph{v}}
+(\nabla\cdot\textbf{\emph{v}})\textbf{\emph{v}}_{p}\right]\cdot\partial_{p}^{2}\textbf{\emph{v}}_{p}d\mathcal{M}
\leq C\|\textbf{\emph{v}}\|_{H^{2}}^{2}
\|\textbf{\emph{v}}_{p}\|_{H^{1}}^{2}+\|\textbf{\emph{v}}\|_{H^{1}}^{4}+\frac{\mu_{1}}{32}\|\textbf{\emph{v}}_{p}\|_{H^{2}}^{2},
\end{align*}
By the H\"older inequality and the Young inequality, we can infer that
\begin{align*}
&\int_{\mathcal{M}}(\textbf{\emph{v}}\cdot\nabla)\textbf{\emph{v}}_{p}\cdot(\Delta\textbf{\emph{v}}_{p}+\partial_{p}^{2}\textbf{\emph{v}}_{p})
d\mathcal{M}\nonumber\\
\leq& \|\textbf{\emph{v}}\|_{L^{\infty}}\|\nabla\textbf{\emph{v}}_{p}\|_{L^{2}}\|\textbf{\emph{v}}_{p}\|_{H^{2}}
\leq C\|\textbf{\emph{v}}\|_{H^{2}}^{2}\|\textbf{\emph{v}}_{p}\|_{H^{1}}^{2}+\frac{\mu_{1}}{32}\|\textbf{\emph{v}}_{p}\|_{H^{2}}^{2},
\end{align*}
where we have used the embedding relation $H^{2}\subset L^{\infty}$ in the last step.
Utilizing the inequality in Lemma \ref{trilinear}, we can deduce that
\begin{align*}
&\int_{\mathcal{M}}\omega\partial_{p}\textbf{\emph{v}}_{p}\cdot\Delta\textbf{\emph{v}}_{p}d\mathcal{M}
\leq \int_{\mathcal{M'}}\int_{p_{0}}^{p_{1}}|\nabla\textbf{\emph{v}}|dp\int_{p_{0}}^{p_{1}}|\partial_{p}\textbf{\emph{v}}_{p}||\Delta\textbf{\emph{v}}_{p}|dpd\mathcal{M'}\nonumber\\
\leq& C\|\nabla\textbf{\emph{v}}\|_{L^2}^{\frac{1}{2}}\left(\|\nabla\textbf{\emph{v}}\|_{L^{2}}^{\frac{1}{2}} + \|\Delta\textbf{\emph{v}}\|_{L^{2}}^{\frac{1}{2}}\right)
\|\Delta\textbf{\emph{v}}_{p}\|_{L^{2}} \|\partial_{p}\textbf{\emph{v}}_{p}\|_{L^{2}}^{\frac{1}{2}}\left(\|\partial_{p}\textbf{\emph{v}}_{p}\|_{L^{2}}^{\frac{1}{2}}+ \|\nabla\partial_{p}\textbf{\emph{v}}_{p}\|_{L^{2}}^{\frac{1}{2}}\right)\nonumber\\
\leq& C\|\textbf{\emph{v}}\|_{H^1}^{\frac{1}{2}}\|\textbf{\emph{v}}\|_{H^2}^{\frac{1}{2}}
\|\partial_{p}\textbf{\emph{v}}_{p}\|_{L^{2}}^{\frac{1}{2}}\|\textbf{\emph{v}}_{p}\|_{H^{2}}^{\frac{3}{2}}
\leq C\|\textbf{\emph{v}}\|_{H^1}^{2}\|\textbf{\emph{v}}\|_{H^2}^{2}
\|\textbf{\emph{v}}_{p}\|_{H^{1}}^{2}+\frac{\mu_{1}}{32}\|\textbf{\emph{v}}_{p}\|_{H^{2}}^{2}.
\end{align*}
Through a similar argument, we have
\begin{align*}
\int_{\mathcal{M}}\omega\partial_{p}\textbf{\emph{v}}_{p}\cdot\partial_{p}^{2}\textbf{\emph{v}}_{p}d\mathcal{M}
\leq C\|\textbf{\emph{v}}\|_{H^1}^{2}\|\textbf{\emph{v}}\|_{H^2}^{2}
\|\textbf{\emph{v}}_{p}\|_{H^{1}}^{2}+\frac{\mu_{1}}{32}\|\textbf{\emph{v}}_{p}\|_{H^{2}}^{2}.
\end{align*}
Using the H\"older inequality and the Young inequality, we have
\begin{align*}
&\int_{\mathcal{M}}[f\textbf{\emph{v}}_{p}^{\bot}-\frac{R}{p}\nabla T-\partial_{p}f_{\textbf{\emph{v}}}]\cdot(\Delta\textbf{\emph{v}}_{p}+\partial_{p}^{2}\textbf{\emph{v}}_{p})d\mathcal{M}\nonumber\\
\leq &C\left(\|\textbf{\emph{v}}\|_{H^{1}}^{2}+\|\theta\|_{H^{1}}^{2}+\|\partial_{p}f_{\textbf{\emph{v}}}\|_{L^{2}}^{2}\right)
+\frac{3\mu_{1}}{32}\|\textbf{\emph{v}}_{p}\|_{H^{2}}^{2}.
\end{align*}
Combining the above inequalities, we can deduce that
\begin{align}\label{008}
\frac{d}{dt}\|\textbf{\emph{v}}_{p}\|_{H^{1}}^{2}+\mu_{1}\|\textbf{\emph{v}}_{p}\|_{H^{2}}^{2}
\leq &C\left(1+\|\textbf{\emph{v}}\|_{H^{2}}^{2}+\|\textbf{\emph{v}}\|_{H^1}^{2}\|\textbf{\emph{v}}\|_{H^2}^{2} \right)\|\textbf{\emph{v}}_{p}\|_{H^{1}}^{2}\nonumber\\
&+C\left(\|\textbf{\emph{v}}\|_{H^{1}}^{2}+\|\textbf{\emph{v}}\|_{H^{1}}^{4}+\|\theta\|_{H^{1}}^{2}+\|\partial_{p}f_{\textbf{\emph{v}}}\|_{L^{2}}^{2}\right).
\end{align}

 Then using the Gronwall inequality and regularities of $\textbf{\emph{v}},\theta$ in Lemma \ref{H1 strong existence}, we   finish the proof.
\end{proof}

\begin{proposition}\label{pro2}
Let $\textbf{v}_{0}, \partial_{p}\textbf{v}_{0}\in\mathbb{V}$, $\theta_{0},\partial_{p}\theta_{0},\partial_{p}f_{\textbf{v}},\partial_{p}f_{\theta}\in L^{2}$. Then for any $t_{1}\in (0,\infty)$, the solution $\theta$ to equation (\ref{e4}) satisfies that
\begin{align*}
\partial_{p}\theta\in L^{\infty}\left(0,t_{1};H^{1}\right)\cap L^{2}\left(0,t_{1};H^{2}\right).
\end{align*}
\begin{proof}
Differentiating equation (\ref{e4}) with respect to $p$, we have
\begin{align}\label{b0}
\partial_{t}\theta_{p}+\textbf{\emph{v}}_{p}\cdot\nabla\theta+\textbf{\emph{v}}\cdot\nabla\theta_{p}
+(\nabla\cdot\textbf{\emph{v}})\theta_{p}+\omega\partial_{p}\theta_{p}+\partial_{p}\mathcal{A}_{\theta}\theta
=\partial_{p}f_{\theta},
\end{align}
where we have denoted $\partial_{p}\theta$ by $\theta_{p}$ for simplicity.

Multiplying equation (\ref{b0}) with $-(\Delta\theta_{p}+\partial_{p}^{2}\theta_{p})$ and integrating on domain $\mathcal{M}$, we have
 \begin{align*}
&-\int_{\mathcal{M}}\partial_{t}\theta_{p}(\Delta\theta_{p}+\partial_{p}^{2}\theta_{p})d\mathcal{M}
-\int_{\mathcal{M}}\partial_{p}\mathcal{A}_{\theta}\theta(\Delta\theta_{p}+\partial_{p}^{2}\theta_{p})d\mathcal{M}\nonumber\\
=&\int_{\mathcal{M}}[\textbf{\emph{v}}_{p}\cdot\nabla\theta+\textbf{\emph{v}}\cdot\nabla\theta_{p}
+(\nabla\cdot\textbf{\emph{v}})\theta_{p}+\omega\partial_{p}\theta_{p}-\partial_{p}f_{\theta}](\Delta\theta_{p}+\partial_{p}^{2}\theta_{p})d\mathcal{M}.
\end{align*}
It follows from integration by parts that
\begin{align*}
-\int_{\mathcal{M}}\partial_{t}\theta_{p}(\Delta\theta_{p}+\partial_{p}^{2}\theta_{p})d\mathcal{M}
=\frac{1}{2}\frac{d}{dt}(\|\nabla\theta_{p}\|_{L^{2}}^{2}+\|\partial_{p}\theta_{p}\|_{L^{2}}^{2}).
\end{align*}
Through a direct calculation and using the H\"older inequality and the Young inequality, we can infer that
\begin{align*}
-\int_{\mathcal{M}}\partial_{p}\mathcal{A}_{\theta}\theta\Delta\theta_{p}d\mathcal{M}
\geq\frac{3\mu_{2}}{4}\|\theta_{p}\|_{H^{2}}^{2}
-C\left(\|\nabla\theta\|_{L^{2}}^{2}+\|\nabla\theta_{p}\|_{L^{2}}^{2}+\|\theta_{p}\|_{L^{2}}^{2}+\|\partial_{p}\theta_{p}\|_{L^{2}}^{2}\right),
\end{align*}
where $\mu_{2}=(p_{0}/p_{1})^{2R/c_{p}}\rm{min}\{\mu_{\theta},\nu_{\theta}\}$.
Using the H\"older inequality and the Young inequality, we have
\begin{align*}
&\int_{\mathcal{M}}\textbf{\emph{v}}_{p}\cdot\nabla\theta(\Delta\theta_{p}+\partial_{p}^{2}\theta_{p})d\mathcal{M}\nonumber\\
\leq&\int_{\mathcal{M}}|\textbf{\emph{v}}_{p}||\nabla\theta||\Delta\theta_{p}+\partial_{p}^{2}\theta_{p}|d\mathcal{M}
\leq\|\textbf{\emph{v}}_{p}\|_{L^{6}}\|\nabla\theta\|_{L^{3}}\|\theta_{p}\|_{H^{2}}\nonumber\\
\leq& C\|\textbf{\emph{v}}_{p}\|_{H^{1}}\|\nabla\theta\|_{H^{1}}\|\Delta\theta_{p}\|_{L^{2}}
\leq C\|\textbf{\emph{v}}_{p}\|_{H^{1}}^{2}\|\theta\|_{H^{2}}^{2}
+\frac{\mu_{2}}{20}\|\theta_{p}\|_{H^{2}}^{2}.
\end{align*}
Considering the embedding relation $H^{2}\subset L^{\infty}$ in $\mathbb{R}^{3}$ and using the H\"older inequality, we can obtain that
\begin{align*}
\int_{\mathcal{M}}\textbf{\emph{v}}\cdot\nabla\theta_{p}(\Delta\theta_{p}+\partial_{p}^{2}\theta_{p})d\mathcal{M}
\leq\|\textbf{\emph{v}}\|_{L^{\infty}}\|\nabla\theta_{p}\|_{L^{2}}\|\theta_{p}\|_{H^{2}}
\leq C\|\textbf{\emph{v}}\|_{H^{2}}^{2}\|\theta_{p}\|_{H^{1}}^{2}+\frac{\mu_{2}}{20}\|\theta_{p}\|_{H^{2}}^{2}.
\end{align*}
Similarly,
\begin{align*}
\int_{\mathcal{M}}(\nabla\cdot\textbf{\emph{v}})\theta_{p}(\Delta\theta_{p}+\partial_{p}^{2}\theta_{p})d\mathcal{M}
\leq\|\nabla\textbf{\emph{v}}\|_{L^{6}}\|\theta_{p}\|_{L^{3}}\|\theta_{p}\|_{H^{2}}
\leq C\|\textbf{\emph{v}}\|_{H^{2}}^{2}\|\theta_{p}\|_{H^{1}}^{2}+\frac{\mu_{2}}{20}\|\theta_{p}\|_{H^{2}}^{2}.
\end{align*}
Applying the inequality in Lemma \ref{trilinear} and Young's inequality, we have
\begin{align*}
&\int_{\mathcal{M}}\omega\partial_{p}\theta_{p}(\Delta\theta_{p}+\partial_{p}^{2}\theta_{p})d\mathcal{M}
\leq\int_{\mathcal{M'}}\int_{p_{0}}^{p_{1}}|\nabla\textbf{\emph{v}}|dp\int_{p_{0}}^{p_{1}}|\partial_{p}\theta_{p}||\Delta\theta_{p}+\partial_{p}^{2}\theta_{p}|dpd\mathcal{M'}\nonumber\\
&\leq C\|\textbf{\emph{v}}\|_{H^{1}}^{\frac{1}{2}}\|\textbf{\emph{v}}\|_{H^{2}}^{\frac{1}{2}}
\|\partial_{p}\theta_{p}\|_{L^{2}}^{\frac{1}{2}}\|\partial_{p}\theta_{p}\|_{H^{1}}^{\frac{1}{2}}
\|\theta_{p}\|_{H^{2}}
\leq C\|\textbf{\emph{v}}\|_{H^{1}}^{\frac{1}{2}}\|\textbf{\emph{v}}\|_{H^{2}}^{\frac{1}{2}}
\|\partial_{p}\theta_{p}\|_{L^{2}}^{\frac{1}{2}}\|\theta_{p}\|_{H^{2}}^{\frac{3}{2}}\nonumber\\
&\leq C\|\textbf{\emph{v}}\|_{H^{1}}^{2}\|\textbf{\emph{v}}\|_{H^{2}}^{2}
\|\theta_{p}\|_{H^{1}}^{2}+\frac{\mu_{2}}{20}\|\theta_{p}\|_{H^{2}}^{2}.
\end{align*}
Utilizing the H\"older inequality again, we infer that
\begin{align*}
-\int_{\mathcal{M}}\partial_{p}f_{\theta}(\Delta\theta_{p}+\partial_{p}^{2}\theta_{p})d\mathcal{M}\leq C\|\partial_{p}f_{\theta}\|_{L^{2}}^{2}+\frac{\mu_{2}}{20}\|\theta_{p}\|_{H^{2}}^{2}.
\end{align*}
Combining the above inequalities, we have
\begin{align}\label{theta-p-H1}
\frac{d}{dt}\|\theta_{p}\|_{H^{1}}^{2}+\mu_{2}\|\theta_{p}\|_{H^{2}}^{2}
\leq& C\left(1+\|\textbf{\emph{v}}\|_{H^{2}}^{2}+\|\textbf{\emph{v}}\|_{H^{1}}^{2}\|\textbf{\emph{v}}\|_{H^{2}}^{2}\right)
\|\theta_{p}\|_{H^{1}}^{2}+\nonumber\\
&C\left(\|\theta\|_{H^{1}}^{2}+\|\textbf{\emph{v}}_{p}\|_{H^{1}}^{2}\|\theta\|_{H^{2}}^{2}
+\|\partial_{p}f_{\theta}\|_{L^{2}}^{2}\right).
\end{align}

Using the Gronwall inequality and regularities of $\textbf{\emph{v}},\theta$ in Lemma \ref{H1 strong existence} and Proposition \ref{pro1}, we can finish the proof.
\end{proof}
\end{proposition}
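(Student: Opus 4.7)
The plan is to mimic the strategy of Proposition \ref{pro1}, now applied to the scalar equation \eqref{e4}. First I differentiate \eqref{e4} with respect to $p$ to obtain an evolution equation for $\theta_{p}:=\partial_{p}\theta$; noting that $\partial_{p}\omega=-\nabla\cdot\textbf{\emph{v}}$, the $p$-derivative of the vertical advection becomes $(\nabla\cdot\textbf{\emph{v}})\theta_{p}+\omega\partial_{p}\theta_{p}$. Then I test this equation in $L^{2}(\mathcal{M})$ against $-(\Delta\theta_{p}+\partial_{p}^{2}\theta_{p})$, which is the natural choice since integration by parts converts the time-derivative piece into $\tfrac{1}{2}\tfrac{d}{dt}\|\theta_{p}\|_{H^{1}}^{2}$ and the dissipation $\partial_{p}\mathcal{A}_{\theta}\theta$ into a coercive contribution bounded below by $c\|\theta_{p}\|_{H^{2}}^{2}$ up to lower-order terms produced by the non-constant weight $(p_{0}/p)^{R/c_{p}}$ and its $p$-derivatives, which are absorbed by Young's inequality.

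For the nonlinear contributions I rely on the three-dimensional Sobolev embedding $H^{2}\subset L^{\infty}$ together with Gagliardo--Nirenberg and H\"older inequalities. The terms $\textbf{\emph{v}}_{p}\cdot\nabla\theta$, $\textbf{\emph{v}}\cdot\nabla\theta_{p}$ and $(\nabla\cdot\textbf{\emph{v}})\theta_{p}$ tested against $\Delta\theta_{p}+\partial_{p}^{2}\theta_{p}$ are each controlled by $C\|\textbf{\emph{v}}\|_{H^{2}}^{2}\|\theta_{p}\|_{H^{1}}^{2}+C\|\textbf{\emph{v}}_{p}\|_{H^{1}}^{2}\|\theta\|_{H^{2}}^{2}$ plus a small multiple of $\|\theta_{p}\|_{H^{2}}^{2}$, using placements like $\|\textbf{\emph{v}}_{p}\|_{L^{6}}\|\nabla\theta\|_{L^{3}}\|\theta_{p}\|_{H^{2}}$ and $\|\textbf{\emph{v}}\|_{L^{\infty}}\|\nabla\theta_{p}\|_{L^{2}}\|\theta_{p}\|_{H^{2}}$. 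The source term $\partial_{p}f_{\theta}$ is trivial. I expect the main obstacle to be the vertical-advection term $\omega\partial_{p}\theta_{p}$, because $\omega=-\int_{p_{0}}^{p}\nabla\cdot\textbf{\emph{v}}(\cdot,p')\,dp'$ is not pointwise controllable by $\nabla\textbf{\emph{v}}$ and a naive H\"older estimate would leave a $\|\theta_{p}\|_{H^{2}}^{2}$ with an unabsorbable coefficient. The fix is to write the $p$-integral out, apply the anisotropic trilinear inequality of Lemma \ref{trilinear} with $f=|\nabla\textbf{\emph{v}}|$, $g=|\partial_{p}\theta_{p}|$ and $h=|\Delta\theta_{p}+\partial_{p}^{2}\theta_{p}|$, and distribute the derivative load so that only $\|\theta_{p}\|_{H^{2}}^{3/2}$ appears; two applications of Young's inequality then yield a bound of the form $C\|\textbf{\emph{v}}\|_{H^{1}}^{2}\|\textbf{\emph{v}}\|_{H^{2}}^{2}\|\theta_{p}\|_{H^{1}}^{2}+\tfrac{\mu_{2}}{20}\|\theta_{p}\|_{H^{2}}^{2}$, which is safe to absorb.

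Collecting all bounds I arrive at a differential inequality of the schematic form
\begin{equation*}
\tfrac{d}{dt}\|\theta_{p}\|_{H^{1}}^{2}+\mu_{2}\|\theta_{p}\|_{H^{2}}^{2}\leq A(t)\|\theta_{p}\|_{H^{1}}^{2}+B(t),
\end{equation*}
where $A(t)=C(1+\|\textbf{\emph{v}}\|_{H^{2}}^{2}+\|\textbf{\emph{v}}\|_{H^{1}}^{2}\|\textbf{\emph{v}}\|_{H^{2}}^{2})$ and $B(t)=C(\|\theta\|_{H^{1}}^{2}+\|\textbf{\emph{v}}_{p}\|_{H^{1}}^{2}\|\theta\|_{H^{2}}^{2}+\|\partial_{p}f_{\theta}\|_{L^{2}}^{2})$. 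By Lemma \ref{H1 strong existence} and Proposition \ref{pro1}, both $A$ and $B$ lie in $L^{1}(0,t_{1})$, so Gronwall's inequality gives $\theta_{p}\in L^{\infty}(0,t_{1};H^{1})$ and time-integrating the dissipation gives $\theta_{p}\in L^{2}(0,t_{1};H^{2})$, completing the proof.
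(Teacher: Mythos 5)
Your proposal is correct and follows essentially the same route as the paper: differentiate \eqref{e4} in $p$, test against $-(\Delta\theta_{p}+\partial_{p}^{2}\theta_{p})$, estimate the advection terms with $H^{2}\subset L^{\infty}$ and the $\omega\partial_{p}\theta_{p}$ term with Lemma \ref{trilinear}, and close via Gronwall using Lemma \ref{H1 strong existence} and Proposition \ref{pro1}. The resulting differential inequality matches \eqref{theta-p-H1} term for term.
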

\begin{proposition}\label{pro3}
Let $\textbf{v}_{0}, \partial_{p}\textbf{v}_{0}\in\mathbb{V}$, $q_{0},\partial_{p}q_{0}\in H^{1},$ $\partial_{p}f_{\textbf{v}},\partial_{p}f_{q}\in L^{2}$. Then for any $t_{1}\in (0,\infty)$, the solution $q$ to equation (\ref{e5}) satisfies that
\begin{align*}
\partial_{p}q\in L^{\infty}\left(0,t_{1};H^{1}\right)\cap L^{2}\left(0,t_{1};H^{2}\right).
\end{align*}
\end{proposition}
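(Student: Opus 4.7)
The plan is to follow the same strategy as in Proposition \ref{pro2}, exploiting the fact that equation (\ref{e5}) for $q$ has exactly the same structural form as equation (\ref{e4}) for $\theta$: both are scalar transport-diffusion equations advected by $(\mathbf{\emph{v}},\omega)$ with a pressure-dependent vertical diffusion operator of the same type. First I would differentiate (\ref{e5}) with respect to $p$, writing $q_p=\partial_p q$, to obtain
\begin{align*}
\partial_{t}q_{p}+\textbf{\emph{v}}_{p}\cdot\nabla q+\textbf{\emph{v}}\cdot\nabla q_{p}
+(\nabla\cdot\textbf{\emph{v}})q_{p}+\omega\partial_{p}q_{p}+\partial_{p}\mathcal{A}_{q}q
=\partial_{p}f_{q},
\end{align*}
where the term $(\nabla\cdot\textbf{\emph{v}})q_p$ arises from applying $\partial_p$ to $\omega\partial_p q$ together with the divergence-free condition (\ref{e3}).

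Next I would take the $L^2(\mathcal{M})$ inner product with $-(\Delta q_p+\partial_p^2 q_p)$. Integration by parts on the time derivative yields $\tfrac{1}{2}\tfrac{d}{dt}\|q_p\|_{H^1}^2$, and the same computation used for $\partial_p\mathcal{A}_\theta\theta$ in Proposition \ref{pro2} gives a coercivity bound $\tfrac{3\mu_3}{4}\|q_p\|_{H^2}^2$ minus lower-order controllable terms, with $\mu_3=(p_0/p_1)^{2R/c_p}\min\{\mu_q,\nu_q\}$ playing the role of $\mu_2$. The nonlinear terms are estimated exactly as in Proposition \ref{pro2}: $\int\textbf{\emph{v}}_p\cdot\nabla q\,(\Delta q_p+\partial_p^2 q_p)$ by H\"older with $L^6\cdot L^3\cdot L^2$ and Gagliardo-Nirenberg gives $C\|\textbf{\emph{v}}_p\|_{H^1}^2\|q\|_{H^2}^2+\tfrac{\mu_3}{20}\|q_p\|_{H^2}^2$; the two terms involving $\textbf{\emph{v}}\cdot\nabla q_p$ and $(\nabla\cdot\textbf{\emph{v}})q_p$ yield $C\|\textbf{\emph{v}}\|_{H^2}^2\|q_p\|_{H^1}^2+\tfrac{\mu_3}{20}\|q_p\|_{H^2}^2$; and the vertical advection $\int\omega\partial_p q_p\,(\Delta q_p+\partial_p^2 q_p)$ is handled by Lemma \ref{trilinear} combined with the Minkowski inequality representation $|\omega|\le\int_{p_0}^{p_1}|\nabla\textbf{\emph{v}}|\,dp$, producing $C\|\textbf{\emph{v}}\|_{H^1}^2\|\textbf{\emph{v}}\|_{H^2}^2\|q_p\|_{H^1}^2+\tfrac{\mu_3}{20}\|q_p\|_{H^2}^2$. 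The forcing term gives $C\|\partial_p f_q\|_{L^2}^2+\tfrac{\mu_3}{20}\|q_p\|_{H^2}^2$.

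Combining these estimates I obtain a differential inequality of the same shape as (\ref{theta-p-H1}):
\begin{align*}
\frac{d}{dt}\|q_{p}\|_{H^{1}}^{2}+\mu_{3}\|q_{p}\|_{H^{2}}^{2}
\le{}& C\bigl(1+\|\textbf{\emph{v}}\|_{H^{2}}^{2}+\|\textbf{\emph{v}}\|_{H^{1}}^{2}\|\textbf{\emph{v}}\|_{H^{2}}^{2}\bigr)\|q_{p}\|_{H^{1}}^{2}\\
&+C\bigl(\|q\|_{H^{1}}^{2}+\|\textbf{\emph{v}}_{p}\|_{H^{1}}^{2}\|q\|_{H^{2}}^{2}+\|\partial_{p}f_{q}\|_{L^{2}}^{2}\bigr).
\end{align*}
I would then close the argument via Gronwall's inequality: the coefficient multiplying $\|q_p\|_{H^1}^2$ is integrable on $(0,t_1)$ by Lemma \ref{H1 strong existence}, and the forcing on the right is integrable by Lemma \ref{H1 strong existence} together with Proposition \ref{pro1}. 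This yields $q_p\in L^\infty(0,t_1;H^1)\cap L^2(0,t_1;H^2)$. I do not anticipate a genuine obstacle here since $q$ decouples from $\theta$ in its evolution (it enters no other equation) and obeys a linear advection-diffusion equation with $\textbf{\emph{v}}$-coefficients already controlled; the only point that requires some care is the $\omega\partial_p q_p$ term, which is handled uniformly with Lemma \ref{trilinear} exactly as in the previous two propositions.
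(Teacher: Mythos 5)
Your proposal is correct and follows essentially the same route as the paper, which simply refers back to the computation for $\theta$ in Proposition \ref{pro2} and only works out separately the viscosity term $\int_{\mathcal{M}}\partial_{p}\mathcal{A}_{q}q\,(-\Delta\partial_{p}q-\partial_{p}^{3}q)\,d\mathcal{M}$, since $\mathcal{A}_{q}$ lacks the $(p_{0}/p)^{R/c_{p}}$ conjugation present in $\mathcal{A}_{\theta}$ (it has the same form as $\mathcal{A}_{\textbf{\emph{v}}}$, so coercivity holds with constant $\min\{\mu_{q},\nu_{q}\}$ up to lower-order terms, and your smaller $\mu_{3}$ is harmless). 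All other estimates and the Gronwall closure are exactly as in the paper.
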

\begin{proof}
The prove process of this proposition is almost the same to the calculation in dealing with $\theta$. The only thing that needs to be calculated separately is the estimate of
   \begin{align*}
   \int_{\mathcal{M}}\partial_{p}\mathcal{A}_{q}q
(-\Delta\partial_{p}q-\partial_{p}^{3}q)d\mathcal{M}.
\end{align*}

Through a direct calculation and integration by parts, we have
 \begin{align*}
 -\int_{\mathcal{M}}\partial_{p}\mathcal{A}_{q}q\Delta\partial_{p}qd\mathcal{M}
  =&\int_{\mathcal{M}}\partial_{p}\left(\mu_{q}\Delta q+\nu_{q}\partial_{p}\left(\left(\frac{gp}{R\bar{\theta}}\right)^{2}\partial_{p}q\right)\right)\Delta\partial_{p}qd\mathcal{M}\nonumber\\
 =&\mu_{q}\int_{\mathcal{M}}\Delta\partial_{p}q\Delta\partial_{p}qd\mathcal{M}+
 \nu_{q}\int_{\mathcal{M}}\partial_{p}\left(\partial_{p}\left(\left(\frac{gp}{R\bar{\theta}}\right)^{2}\partial_{p}q\right)\right)\Delta\partial_{p}qd\mathcal{M}\nonumber\\
 =&\mu_{q}\|\Delta\partial_{p}q\|_{L^{2}}^{2}+\nu_{q}\|\nabla\partial_{p}^{2}q\|_{w}^{2}
 +\nu_{q}\int_{\mathcal{M}}2\left(\frac{g}{R\bar{\theta}}\right)^{2}p\nabla\partial_{p}q\cdot\nabla\partial_{p}^{2}qd\mathcal{M}.
\end{align*}
Using the H\"older inequality, we can deduce that
\begin{align*}
\int_{\mathcal{M}}\left(\frac{g}{R\bar{\theta}}\right)^{2}p\nabla\partial_{p}q\cdot\nabla\partial_{p}^{2}qd\mathcal{M}
\leq C\|\nabla\partial_{p}q\|_{L^{2}}\|\nabla\partial_{p}^{2}q\|_{L^{2}}
\leq C\|\nabla\partial_{p}q\|_{L^{2}}^{2}+\frac{1}{4}\|\nabla\partial_{p}^{2}q\|_{w}^{2},
\end{align*}
where we have used the equivalence between $\|\cdot\|_{L^{2}}$ and $\|\cdot\|_{w}$.

Thus
\begin{align*}
 -\int_{\mathcal{M}}\partial_{p}\mathcal{A}_{q}q\Delta\partial_{p}qd\mathcal{M}
  \geq
 \mu_{q}\|\Delta\partial_{p}q\|_{L^{2}}^{2}+\frac{3\nu_{q}}{4}\|\nabla\partial_{p}^{2}q\|_{w}^{2}
 -C\|q\|_{H^{2}}^{2}.
\end{align*}

Through similar calculations, we can deal with
$-\int_{\mathcal{M}}\partial_{p}\mathcal{A}_{q}q\partial_{p}^{3}qd\mathcal{M}$.
So we omit the detail.
\end{proof}

\textbf{Proof of Theorem 2.3}
Combining the known well-posedness result in Lemma \ref{H1 strong existence} and the a priori estimates in Propositions \ref{pro1}, \ref{pro2} and \ref{pro3}, we can finish the proof.

\section{The well posedness of $H^{2}$ solution}
In this section, we consider the $H^{2}$ regularity of solutions. Due to the result in Theorem \ref{theorem1}, we only need to improve the regularity of solutions in the horizontal direction. We first consider the $H^{2}$ a priori estimate of $\textbf{\emph{v}}$.
\begin{proposition}\label{pro4}
Let $\textbf{v}_{0}\in\mathbb{V}\cap (H^{2})^{2}$, $f_{\textbf{v}}\in H^{1}$. Then for any $t_{1}\in (0,\infty)$, the solution $\textbf{v}$ to equation (\ref{e1}) satisfies that
\begin{align*}
\textbf{v}\in L^{\infty}\left(0,t_{1};\mathbb{V}\cap (H^{2})^{2}\right)\cap L^{2}\left(0,t_{1};(H^{3})^{2}\right).
\end{align*}
\end{proposition}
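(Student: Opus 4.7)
The plan mirrors the strategy of Propositions \ref{pro1}--\ref{pro3}, but with a horizontal derivative in place of $\partial_p$. Theorem \ref{theorem1} already supplies $\textbf{v},\partial_p\textbf{v}\in L^\infty(0,t_1;H^1)\cap L^2(0,t_1;H^2)$ together with analogous control on $\theta$ and $q$, so it suffices to show that each horizontal derivative $\partial_j\textbf{v}$ ($j=1,2$) lies in $L^\infty(0,t_1;H^1)\cap L^2(0,t_1;H^2)$. Summing over $j$ and combining with Proposition \ref{pro1} then delivers the full $H^2/H^3$ regularity claimed.

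Concretely, I would differentiate (\ref{e1}) in $\partial_j$, take the $L^2(\mathcal{M})$ inner product with $-(\Delta+\partial_p^2)\partial_j\textbf{v}$, and repeat the scheme of Proposition \ref{pro1}. The time-derivative term produces $\tfrac{1}{2}\tfrac{d}{dt}(\|\nabla\partial_j\textbf{v}\|_{L^2}^2+\|\partial_p\partial_j\textbf{v}\|_{L^2}^2)$; the viscosity term is coercive and yields a lower bound of the form $\tfrac{3\mu_1}{4}\|\partial_j\textbf{v}\|_{H^2}^2$ modulo lower-order perturbations; and the quasilinear pieces $(\partial_j\textbf{v}\cdot\nabla)\textbf{v}$, $(\textbf{v}\cdot\nabla)\partial_j\textbf{v}$ and $\omega\,\partial_p\partial_j\textbf{v}$ are all bounded by the H\"older/Gagliardo--Nirenberg/Lemma \ref{trilinear} chain used there. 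The Coriolis and source contributions are handled by H\"older and Young, using $f_{\textbf{v}}\in H^1$.

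The two delicate contributions are the non-local ones: $(\partial_j\omega)\,\partial_p\textbf{v}$ and the horizontal pressure gradient $\nabla\partial_j\Phi$. For the first, the representation $\omega(\cdot,p)=-\int_{p_0}^{p}\nabla\cdot\textbf{v}\,dp'$ means that $\partial_j\omega$ is a vertical primitive of $\partial_j(\nabla\cdot\textbf{v})$, so I would invoke the Minkowski integral inequality together with Lemma \ref{trilinear} to redistribute derivatives onto $\textbf{v}\in L^2_t H^2$ and $\partial_p\textbf{v}\in L^\infty_t H^1$ (both from Theorem \ref{theorem1}), and absorb the top-order factor into $\tfrac{\mu_1}{C}\|\partial_j\textbf{v}\|_{H^2}^2$. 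For the pressure, the hydrostatic relation (\ref{e2}) yields the decomposition $\Phi=\Phi_s(x,y,t)-\int_{p_0}^{p}\tfrac{RT}{p'}dp'$; the baroclinic part contributes at worst $\|\theta\|_{H^2}$, already controlled in $L^2_t$ by Theorem \ref{theorem1}, while the barotropic gradient $\nabla\Phi_s$ is independent of $p$ and therefore pairs against the divergence-free structure inherent in $\mathbb{V}$ in the usual 2D Navier--Stokes manner.

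Assembling all bounds produces a differential inequality
\begin{align*}
\frac{d}{dt}\|\partial_j\textbf{v}\|_{H^1}^2+\mu_1\|\partial_j\textbf{v}\|_{H^2}^2\leq A(t)\|\partial_j\textbf{v}\|_{H^1}^2+B(t),
\end{align*}
with $A,B\in L^1(0,t_1)$ thanks to the a priori bounds of Theorem \ref{theorem1}, and Gronwall's inequality closes the estimate. The main obstacle I anticipate is precisely the treatment of the non-local $\omega$ and $\Phi$ terms: the vertical integrals in their definitions effectively promote one horizontal derivative, so the argument must simultaneously exploit the divergence-free structure of $\mathbb{V}$, the hydrostatic decomposition of $\Phi$, and the Minkowski/Lemma \ref{trilinear} combination to absorb the resulting top-order contributions into the coercive viscosity term.
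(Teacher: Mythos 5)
Your proposal is correct and follows essentially the same route as the paper: a second-order energy estimate that controls the horizontal part of the $H^{2}$ norm, handles $\nabla\Phi$ via the hydrostatic decomposition $\Phi=\Phi_{s}-\int_{p_{1}}^{p}\frac{RT}{p'}dp'$ (with the barotropic part killed by the divergence-free vertical average), treats the $\omega$ terms with Lemma \ref{trilinear} and the Minkowski inequality, and closes by Gronwall using the bounds on $\partial_{p}\textbf{\emph{v}}$ from Theorem \ref{theorem1}. The only cosmetic difference is that the paper tests the undifferentiated equation against $\Delta^{2}\textbf{\emph{v}}$ (exploiting the cancellation $\int_{\mathcal{M}}[\textbf{\emph{v}}\cdot\nabla(\Delta\textbf{\emph{v}})+\omega\partial_{p}(\Delta\textbf{\emph{v}})]\cdot\Delta\textbf{\emph{v}}\,d\mathcal{M}=0$ and then adding the estimate (\ref{008})), whereas you differentiate in $\partial_{j}$ first and test with $-(\Delta+\partial_{p}^{2})\partial_{j}\textbf{\emph{v}}$; after integration by parts these yield the same estimates.
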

\begin{proof}
Multiplying equation (\ref{e1}) with $\Delta^{2}\textbf{\emph{v}}$ and integrating on domain $\mathcal{M}$, we have
\begin{align*}
\int_{\mathcal{M}}\partial_{t}\textbf{\emph{v}}\cdot\Delta^{2}\textbf{\emph{v}}d\mathcal{M}
+\int_{\mathcal{M}}\mathcal{A}_{\textbf{\emph{v}}}\textbf{\emph{v}}\cdot\Delta^{2}\textbf{\emph{v}}d\mathcal{M}
=&-\int_{\mathcal{M}}\left(\textbf{\emph{v}}\cdot\nabla\textbf{\emph{v}}+\omega\partial_{p}\textbf{\emph{v}}\right)\cdot\Delta^{2}\textbf{\emph{v}}d\mathcal{M}
\nonumber\\
&-\int_{\mathcal{M}}(f\textbf{\emph{v}}^{\bot}+\nabla\Phi-f_{\textbf{\emph{v}}})\cdot\Delta^{2}\textbf{\emph{v}}d\mathcal{M}.
\end{align*}
 Integrating by parts, we can infer that
\begin{align*}
\int_{\mathcal{M}}\partial_{t}\textbf{\emph{v}}\cdot\Delta^{2}\textbf{\emph{v}}d\mathcal{M}
=\frac{1}{2}\frac{d}{dt}\|\Delta\textbf{\emph{v}}\|_{L^{2}}^{2},
\end{align*}
and
\begin{align*}
\int_{\mathcal{M}}\mathcal{A}_{\textbf{\emph{v}}}\textbf{\emph{v}}\cdot\Delta^{2}\textbf{\emph{v}}d\mathcal{M}
=\mu_{v}\|\nabla\Delta\textbf{\emph{v}}\|_{L^{2}}^{2}+
\nu_{v}\|\Delta\partial_{p}\textbf{\emph{v}}\|_{w}^{2}.
\end{align*}
Obviously,
\begin{align*}
\int_{\mathcal{M}}f\textbf{\emph{v}}^{\bot}\cdot\Delta^{2}\textbf{\emph{v}}d\mathcal{M}=0.
\end{align*}
Considering equation (\ref{e2}), we know that
\begin{align*}
\Phi=-\int_{p_{1}}^{p}\frac{R}{p}Tdp+\Phi_{s},
\end{align*}
where $\Phi_{s}$ is a given geopotential at $p=p_{1}$. Then we can deduce that
\begin{align*}
&\int_{\mathcal{M}}\nabla\Phi\cdot\Delta^{2}\textbf{\emph{v}}d\mathcal{M}
=\int_{\mathcal{M}}\nabla\Phi_{s}\cdot\Delta^{2}\textbf{\emph{v}}d\mathcal{M}+
\int_{\mathcal{M}}\int_{p_{1}}^{p}\frac{-R\nabla T}{p'}dp'\cdot\Delta^{2}\textbf{\emph{v}}d\mathcal{M}\nonumber\\
&\leq \|\Delta\theta\|_{L^{2}}\|\nabla\Delta\textbf{\emph{v}}\|_{L^{2}}
\leq C\|\Delta\theta\|_{L^{2}}^{2}+\frac{\mu_{1}}{6}\|\nabla\Delta\textbf{\emph{v}}\|_{L^{2}}^{2},
\end{align*}
where we have used
\begin{align*}
\int_{\mathcal{M}}\nabla\Phi_{s}\cdot\Delta^{2}\textbf{\emph{v}}d\mathcal{M}
=\int_{\mathcal{M}'}\Phi_{s}\cdot\Delta^{2}(\nabla\cdot\bar{\textbf{\emph{v}}})d\mathcal{M}=0.
\end{align*}
Utilizing integration by parts, the H\"older inequality and the Young inequality, we have
 \begin{align*}
 \int_{\mathcal{M}}f_{\textbf{\emph{v}}}\cdot\Delta^{2}\textbf{\emph{v}}d\mathcal{M}
 \leq C\|f_{\textbf{\emph{v}}}\|_{H^{1}}^{2}+\frac{\mu_{1}}{6}\|\nabla\Delta\textbf{\emph{v}}\|_{L^{2}}.
 \end{align*}
 Through integration by parts, we infer that
 \begin{align*}
 &\int_{\mathcal{M}}\left(\textbf{\emph{v}}\cdot\nabla\textbf{\emph{v}}+\omega\partial_{p}\textbf{\emph{v}}\right)\cdot\Delta^{2}\textbf{\emph{v}}d\mathcal{M}\nonumber\\
 \leq&C\int_{\mathcal{M}}|\Delta\textbf{\emph{v}}|^{2}|\nabla\textbf{\emph{v}}|d\mathcal{M} +C\int_{\mathcal{M}}|\Delta\textbf{\emph{v}}||\Delta\omega||\partial_{p}\textbf{\emph{v}}|d\mathcal{M}
+C\int_{\mathcal{M}}|\Delta\textbf{\emph{v}}||\nabla\omega||\nabla\partial_{p}\textbf{\emph{v}}|d\mathcal{M},
 \end{align*}
 where we have used
 \begin{align*}
 \int_{\mathcal{M}}\left[\textbf{\emph{v}}\cdot\nabla(\Delta\textbf{\emph{v}})+\omega\partial_{p}(\Delta\textbf{\emph{v}})\right]\cdot\Delta\textbf{\emph{v}}d\mathcal{M}=0.
 \end{align*}
 Using the H\"older inequality and the Ladyzhenskaya inequality, we have
 \begin{align*}
 &\int_{\mathcal{M}}|\Delta\textbf{\emph{v}}|^{2}|\nabla\textbf{\emph{v}}|d\mathcal{M}
\leq C\|\nabla\textbf{\emph{v}}\|_{L^{6}}\|\Delta\textbf{\emph{v}}\|_{L^{3}}\|\Delta\textbf{\emph{v}}\|_{L^{2}}\nonumber\\
&\leq C\|\textbf{\emph{v}}\|_{H^{2}}\|\Delta\textbf{\emph{v}}\|_{L^{2}}^{\frac{3}{2}}
 \|\Delta\textbf{\emph{v}}\|_{H^{1}}^{\frac{1}{2}}
\leq C\|\textbf{\emph{v}}\|_{H^{2}}^{\frac{10}{3}}+\frac{\mu_{1}}{24}\|\textbf{\emph{v}}\|_{H^{3}}^{2}\nonumber\\
&\leq C(\|\textbf{\emph{v}}\|_{H^{2}}^{2}+1)\|\textbf{\emph{v}}\|_{H^{2}}^{2}+\frac{\mu_{1}}{18}\|\textbf{\emph{v}}\|_{H^{3}}^{2}.
  \end{align*}
    Using the inequality in Lemma \ref{trilinear}, we get
  \begin{align*}
  \int_{\mathcal{M}}|\Delta\textbf{\emph{v}}||\Delta\omega||\partial_{p}\textbf{\emph{v}}|d\mathcal{M}
  &\leq \int_{\mathcal{M}'}\int_{p_{0}}^{p_{1}}|\nabla\Delta\textbf{\emph{v}}|dp\int_{p_{0}}^{p_{1}}
  |\Delta\textbf{\emph{v}}||\partial_{p}\textbf{\emph{v}}|dpd\mathcal{M}'\nonumber\\
  &\leq
  C\|\nabla\Delta\textbf{\emph{v}}\|_{L^{2}}\|\Delta\textbf{\emph{v}}\|_{L^{2}}^{\frac{1}{2}}
  \|\Delta\textbf{\emph{v}}\|_{H^{1}}^{\frac{1}{2}}
  \|\textbf{\emph{v}}\|_{H^{1}}^{\frac{1}{2}}\|\partial_{p}\textbf{\emph{v}}\|_{H^{1}}^{\frac{1}{2}}\nonumber\\
 &\leq C\|\textbf{\emph{v}}\|_{H^{2}}^{\frac{1}{2}}\|\textbf{\emph{v}}\|_{H^{3}}^{\frac{3}{2}}
\|\textbf{\emph{v}}\|_{H^{1}}^{\frac{1}{2}}\|\partial_{p}\textbf{\emph{v}}\|_{H^{1}}^{\frac{1}{2}}\nonumber\\
 &\leq C\|\textbf{\emph{v}}\|_{H^{1}}^{2}\|\partial_{p}\textbf{\emph{v}}\|_{H^{1}}^{2}\|\textbf{\emph{v}}\|_{H^{2}}^{2}
 +\frac{\mu_{1}}{18}\|\textbf{\emph{v}}\|_{H^{3}}^{2},
 \end{align*}
 Using the H\"older inequality and Ladyzhenskaya's inequality again, we can deduce that
 \begin{align*}
 &\int_{\mathcal{M}}|\Delta\textbf{\emph{v}}||\nabla\omega||\nabla\partial_{p}\textbf{\emph{v}}|d\mathcal{M}
\leq C\|\Delta\textbf{\emph{v}}\|_{L^{2}}\|\nabla\omega\|_{L^{3}}
\|\nabla\partial_{p}\textbf{\emph{v}}\|_{L^{6}}\nonumber\\
&\leq C\|\Delta\textbf{\emph{v}}\|_{L^{2}}\|\nabla\omega\|_{L^{2}}^{\frac{1}{2}}
\|\nabla\omega\|_{H^{1}}^{\frac{1}{2}}
\|\partial_{p}\textbf{\emph{v}}\|_{H^{2}}
\leq C\|\Delta\textbf{\emph{v}}\|_{L^{2}}^{\frac{3}{2}}
\|\Delta\textbf{\emph{v}}\|_{H^{1}}^{\frac{1}{2}}
\|\partial_{p}\textbf{\emph{v}}\|_{H^{2}}\nonumber\\
&\leq C\|\textbf{\emph{v}}\|_{H^{2}}^{\frac{3}{2}}
\|\textbf{\emph{v}}\|_{H^{3}}^{\frac{1}{2}}
\|\partial_{p}\textbf{\emph{v}}\|_{H^{2}}
\leq C\|\partial_{p}\textbf{\emph{v}}\|_{H^{2}}^{\frac{4}{3}}\|\textbf{\emph{v}}\|_{H^{2}}^{2}
+\frac{\mu_{1}}{18}\|\textbf{\emph{v}}\|_{H^{3}}^{2}\nonumber\\
&\leq C\left(\|\partial_{p}\textbf{\emph{v}}\|_{H^{2}}^{2}+1\right)\|\textbf{\emph{v}}\|_{H^{2}}^{2}
+\frac{\mu_{1}}{18}\|\textbf{\emph{v}}\|_{H^{3}}^{2},
\end{align*}
where  we have used the relation between $\omega$ and $\textbf{\emph{v}}$ as well as the Minkowski inequality in the fourth step. Thus
\begin{align*}
&\int_{\mathcal{M}}\left(\textbf{\emph{v}}\cdot\nabla\textbf{\emph{v}}+\omega\partial_{p}\textbf{\emph{v}}\right)\Delta^{2}\textbf{\emph{v}}d\mathcal{M}\nonumber\\
\leq& C\left(1+\|\textbf{\emph{v}}\|_{H^{2}}^{2}+\|\textbf{\emph{v}}\|_{H^{1}}^{2}\|\textbf{\emph{v}}_{p}\|_{H^{1}}^{2}+
\|\textbf{\emph{v}}_{p}\|_{H^{2}}^{2}\right)\|\textbf{\emph{v}}\|_{H^{2}}^{2}
+\frac{\mu_{1}}{6}\|\textbf{\emph{v}}\|_{H^{3}}^{2}.
 \end{align*}
 Combining all the above inequalities, we infer that
 \begin{align}\label{d7}
 \frac{d}{dt}\|\Delta\textbf{\emph{v}}\|_{L^{2}}^{2}+&\mu_{1}\left(\|\nabla\Delta\textbf{\emph{v}}\|_{L^{2}}^{2}+
\|\Delta\partial_{p}\textbf{\emph{v}}\|_{w}^{2}\right)
\leq C\left(\|\theta\|_{H^{2}}^{2}+\|S_{\textbf{\emph{v}}}\|_{H^{1}}^{2}\right)+\frac{\mu_{1}}{2}\|\textbf{\emph{v}}\|_{H^{3}}^{2}+\nonumber\\
&C\left(1+\|\textbf{\emph{v}}\|_{H^{2}}^{2}+\|\textbf{\emph{v}}\|_{H^{1}}^{2}\|\textbf{\emph{v}}_{p}\|_{H^{1}}^{2}+
\|\textbf{\emph{v}}_{p}\|_{H^{2}}^{2}\right)\|\textbf{\emph{v}}\|_{H^{2}}^{2}.
 \end{align}

Thus we can deduce from (\ref{008}) and (\ref{d7}) that
\begin{align*}
 \frac{d}{dt}\|\textbf{\emph{v}}\|_{H^{2}}^{2}+\mu_{1}\|\textbf{\emph{v}}\|_{H^{3}}^{2}
\leq&
C\left(1+\|\textbf{\emph{v}}\|_{H^{2}}^{2}+\|\textbf{\emph{v}}\|_{H^{1}}^{2}\|\textbf{\emph{v}}_{p}\|_{H^{1}}^{2}+
\|\textbf{\emph{v}}_{p}\|_{H^{2}}^{2}+\|\textbf{\emph{v}}\|_{H^1}^{2}\|\textbf{\emph{v}}\|_{H^2}^{2} \right)\|\textbf{\emph{v}}\|_{H^{2}}^{2}\nonumber\\
&+C\left(\|\textbf{\emph{v}}\|_{H^{1}}^{2}+\|\textbf{\emph{v}}\|_{H^{1}}^{4}+\|\theta\|_{H^{1}}^{2}+\|\theta\|_{H^{2}}^{2}+\|f_{\textbf{\emph{v}}}\|_{H^{1}}^{2}\right).
 \end{align*}
Considering regularities of $\textbf{\emph{v}},\theta$ in Lemma \ref{H1 strong existence} and Theorem \ref{theorem1}, using the Gronwall inequality, we can finish the proof of this proposition.
\end{proof}
\begin{proposition}\label{pro5}
Let $\textbf{v}_{0}\in\mathbb{V}\cap (H^{2})^{2},\theta_{0},q_{0}\in H^{2}$, $f_{\textbf{v}},f_{\theta},f_{q}\in H^{1}$. Then for any $t_{1}\in (0,\infty)$, the solution $(\theta,q)$ to equations (\ref{e4})-(\ref{e5}) satisfies that
\begin{align*}
(\theta,q)\in L^{\infty}\left(0,t_{1}; (H^{2})^{2}\right)\cap L^{2}\left(0,t_{1};(H^{3})^{2}\right).
\end{align*}
\end{proposition}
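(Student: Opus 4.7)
The plan is to mirror the strategy of Proposition \ref{pro4}: multiply equation (\ref{e4}) by $\Delta^{2}\theta$ and equation (\ref{e5}) by $\Delta^{2}q$, integrate over $\mathcal{M}$, and derive a differential inequality for $\|\Delta\theta\|_{L^{2}}^{2}+\|\Delta q\|_{L^{2}}^{2}$. After integration by parts, the time-derivative term produces $\tfrac{1}{2}\tfrac{d}{dt}\|\Delta\theta\|_{L^{2}}^{2}$, and the viscosity term $\mathcal{A}_{\theta}\theta$ tested against $\Delta^{2}\theta$ yields (as in Propositions \ref{pro1}-\ref{pro3}) a coercive contribution $\tfrac{3\mu_{2}}{4}\bigl(\|\nabla\Delta\theta\|_{L^{2}}^{2}+\|\Delta\partial_{p}\theta\|_{w}^{2}\bigr)$ plus lower-order remainders of the form $C\|\theta\|_{H^{2}}^{2}$ that are readily absorbed.

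Next I would bound the transport terms $\mathbf{v}\cdot\nabla\theta$ and $\omega\partial_{p}\theta$. Moving one Laplacian off $\Delta^{2}\theta$ and using $\int_{\mathcal{M}}(\mathbf{v}\cdot\nabla\Delta\theta+\omega\partial_{p}\Delta\theta)\Delta\theta\,d\mathcal{M}=0$, I am left with three commutator-type integrals controlled by $\int |\Delta\theta|^{2}|\nabla\mathbf{v}|\,d\mathcal{M}$, $\int |\Delta\theta||\Delta\omega||\partial_{p}\theta|\,d\mathcal{M}$, and $\int |\Delta\theta||\nabla\omega||\nabla\partial_{p}\theta|\,d\mathcal{M}$. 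The first is handled by Hölder and Ladyzhenskaya exactly as in Proposition \ref{pro4}. For the $\omega$-terms, I use $\omega=-\int_{p_{0}}^{p}\nabla\cdot\mathbf{v}(x,y,p')\,dp'$ together with the Minkowski inequality to replace $\omega$-norms by $\mathbf{v}$-norms, then apply the trilinear estimate of Lemma \ref{trilinear} to gain one half derivative at the price of $\|\theta\|_{H^{3}}^{2}$ and $\|\mathbf{v}\|_{H^{3}}^{2}$ factors, which are absorbed into the diffusion on the left-hand side by Young's inequality (using that $\mathbf{v}_{p}\in L^{2}(0,t_{1};H^{2})$ from Proposition \ref{pro3} so $\mathbf{v}\in L^{2}(0,t_{1};H^{3})$ is already available via Proposition \ref{pro4}). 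Combining with the forcing term $f_{\theta}$ bounded by $C\|f_{\theta}\|_{H^{1}}^{2}$, I obtain
\begin{align*}
\frac{d}{dt}\|\Delta\theta\|_{L^{2}}^{2}+\mu_{2}\bigl(\|\nabla\Delta\theta\|_{L^{2}}^{2}+\|\Delta\partial_{p}\theta\|_{w}^{2}\bigr)
\leq G_{1}(t)\|\theta\|_{H^{2}}^{2}+H_{1}(t),
\end{align*}
where $G_{1},H_{1}\in L^{1}(0,t_{1})$ thanks to the regularities of $\mathbf{v}$ (Lemma \ref{H1 strong existence}, Propositions \ref{pro1} and \ref{pro4}) and of $\theta_{p}$ (Proposition \ref{pro2}).

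The estimate for $q$ is formally identical; the only piece that is not already present in the $\theta$ computation is the contribution from the $q$-viscosity tested against $\Delta^{2}q$, which is handled exactly as in Proposition \ref{pro3}. Adding the new bound for $\|\Delta\theta\|_{L^{2}}^{2}+\|\Delta q\|_{L^{2}}^{2}$ to the $\|\theta_{p}\|_{H^{1}}^{2}+\|q_{p}\|_{H^{1}}^{2}$ estimate \eqref{theta-p-H1} already available from Section 3 gives a closed differential inequality for $\|\theta\|_{H^{2}}^{2}+\|q\|_{H^{2}}^{2}$, to which Gronwall's inequality applies and yields the claimed $L^{\infty}(0,t_{1};(H^{2})^{2})\cap L^{2}(0,t_{1};(H^{3})^{2})$ bound. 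The main obstacle is the vertical advection term $\int|\Delta\theta||\Delta\omega||\partial_{p}\theta|\,d\mathcal{M}$: because $\omega$ costs one horizontal derivative of $\mathbf{v}$, its second-order control requires $\mathbf{v}\in L^{2}(0,t_{1};H^{3})$, which is precisely the output of Proposition \ref{pro4}, so the present proposition must be proved after (or jointly with) that one.
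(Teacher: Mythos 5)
Your proposal is correct in outline and lands on the same skeleton as the paper: test (\ref{e4}) with $\Delta^{2}\theta$, extract coercivity $\tfrac{3\mu_{2}}{4}\bigl(\|\nabla\Delta\theta\|_{L^{2}}^{2}+\|\Delta\partial_{p}\theta\|_{L^{2}}^{2}\bigr)$ from $\mathcal{A}_{\theta}$, control the transport terms using $\textbf{\emph{v}}\in L^{2}(0,t_{1};H^{3})$ from Proposition \ref{pro4}, add the resulting inequality to the vertical estimate (\ref{theta-p-H1}) to close in $H^{2}$, and apply Gronwall; the treatment of $q$ is identical. Where you genuinely diverge is in the handling of $\int(\textbf{\emph{v}}\cdot\nabla\theta+\omega\partial_{p}\theta)\Delta^{2}\theta$: you integrate by parts twice and invoke the cancellation $\int(\textbf{\emph{v}}\cdot\nabla\Delta\theta+\omega\partial_{p}\Delta\theta)\Delta\theta\,d\mathcal{M}=0$, estimating the leftover commutators partly via Lemma \ref{trilinear} (mirroring the paper's treatment of the velocity equation in Proposition \ref{pro4}), whereas the paper integrates by parts only once, writes the result as four integrals against $\nabla\Delta\theta$, and bounds each by plain H\"older with $\|\nabla\omega\|_{L^{6}},\|\omega\|_{L^{\infty}}\leq C\|\textbf{\emph{v}}\|_{H^{3}}$ via Minkowski --- no cancellation and no trilinear lemma needed, since $\|\textbf{\emph{v}}\|_{H^{3}}^{2}$ is already an admissible $L^{1}$-in-time Gronwall coefficient. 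Your route works but is slightly heavier; the paper's is the more economical one once Proposition \ref{pro4} is in hand.

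Two small inaccuracies you should fix. First, your list of commutator integrals for the horizontal advection is copied from the $\textbf{\emph{v}}$-equation and is wrong here: since the transported field ($\theta$) and the test field differ from the velocity, $\Delta(\textbf{\emph{v}}\cdot\nabla\theta)-\textbf{\emph{v}}\cdot\nabla\Delta\theta$ produces $\int|\Delta\textbf{\emph{v}}||\nabla\theta||\Delta\theta|\,d\mathcal{M}$ and $\int|\nabla\textbf{\emph{v}}||\nabla^{2}\theta||\Delta\theta|\,d\mathcal{M}$, not $\int|\Delta\theta|^{2}|\nabla\textbf{\emph{v}}|\,d\mathcal{M}$; both are still controllable by H\"older and Gagliardo--Nirenberg with the available norms, so this is a slip rather than a gap. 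Second, the regularity $\partial_{p}\textbf{\emph{v}}\in L^{2}(0,t_{1};H^{2})$ comes from Proposition \ref{pro1}, not Proposition \ref{pro3} (which concerns $q$). Your observation that the proposition must be proved after Proposition \ref{pro4}, because $\omega$ costs one horizontal derivative of $\textbf{\emph{v}}$, is exactly the ordering the paper uses.
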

\begin{proof}
Since the $H^{2}$ regularity of the solution in vertical direction has been obtained, we mainly need to consider the a priori estimates in horizontal direction.

For $\theta$, multiplying equation (\ref{e4}) with $\Delta^{2}\theta$ and integrating on domain $\mathcal{M}$, we have
\begin{align*}
\int_{\mathcal{M}}\partial_{t}\theta\Delta^{2}\theta d\mathcal{M} +\int_{\mathcal{M}}\mathcal{A}_{\theta}\theta\Delta^{2}\theta d\mathcal{M}
+\int_{\mathcal{M}}(\textbf{\emph{v}}\cdot\nabla\theta+\omega\partial_{p}\theta)\Delta^{2}\theta d\mathcal{M}
=\int_{\mathcal{M}}f_{\theta}\Delta^{2}\theta d\mathcal{M}.
\end{align*}
Through integration by parts, we can deduce that
\begin{align*}
\int_{\mathcal{M}}\partial_{t}\theta\Delta^{2}\theta d\mathcal{M}=\frac{1}{2}\frac{d}{dt}\|\Delta\theta\|_{L^{2}}^{2}.
\end{align*}
Through a direct calculation, using the H\"older inequality and the Young inequality, we can deduce that
\begin{align*}
\int_{\mathcal{M}}\mathcal{A}_{\theta}\theta\Delta^{2}\theta d\mathcal{M}
\geq\frac{3\mu_{2}}{4}\left(\|\nabla\Delta\theta\|_{L^{2}}^{2}+\|\Delta\partial_{p}\theta\|_{L^{2}}^{2}\right)
-C\left(\|\nabla\theta\|_{L^{2}}^{2}+\|\nabla\partial_{p}\theta\|_{L^{2}}^{2}+
\|\Delta\theta\|_{L^{2}}^{2} \right).
\end{align*}
By integration by parts, we have
\begin{align*}
\int_{\mathcal{M}}(\textbf{\emph{v}}\cdot\nabla\theta+\omega\partial_{p}\theta)\Delta^{2}\theta d\mathcal{M}
\leq &\int_{\mathcal{M}}|\nabla\textbf{\emph{v}}||\nabla\theta||\nabla\Delta\theta|d\mathcal{M} +\int_{\mathcal{M}}|\textbf{\emph{v}}||\Delta\theta||\nabla\Delta\theta|d\mathcal{M}\nonumber\\
&+\int_{\mathcal{M}}|\nabla\omega||\partial_{p}\theta||\nabla\Delta\theta|d\mathcal{M}
+\int_{\mathcal{M}}|\omega||\nabla\partial_{p}\theta||\nabla\Delta\theta|d\mathcal{M}.
\end{align*}
Then we estimate these four terms respectively.
Using the H\"older inequality and the Young inequality, we obtain that
\begin{align*}
\int_{\mathcal{M}}|\nabla\textbf{\emph{v}}||\nabla\theta||\nabla\Delta\theta|d\mathcal{M} &\leq\|\nabla\textbf{\emph{v}}\|_{L^{6}}\|\nabla\theta\|_{L^{3}}\|\nabla\Delta\theta\|_{L^{2}} \leq C\|\textbf{\emph{v}}\|_{H^{2}}\|\theta\|_{H^{2}}\|\nabla\Delta\theta\|_{L^{2}}\nonumber\\ &\leq C\|\textbf{\emph{v}}\|_{H^{2}}^{2}\|\theta\|_{H^{2}}^{2}+\frac{\mu_{2}}{32}\|\nabla\Delta\theta\|_{L^{2}}^{2},
\end{align*}
and
\begin{align*}
\int_{\mathcal{M}}|\textbf{\emph{v}}||\Delta\theta||\nabla\Delta\theta|d\mathcal{M} \leq&\|\textbf{\emph{v}}\|_{L^{\infty}}\|\Delta\theta\|_{L^{2}}\|\nabla\Delta\theta\|_{L^{2}} \leq C\|\textbf{\emph{v}}\|_{H^{2}}\|\theta\|_{H^{2}}\|\nabla\Delta\theta\|_{L^{2}}\nonumber\\ \leq &C\|\textbf{\emph{v}}\|_{H^{2}}^{2}\|\theta\|_{H^{2}}^{2}+\frac{\mu_{2}}{32}\|\nabla\Delta\theta\|_{L^{2}}^{2},
\end{align*}
where in the second step we have used the embedding relation $H^{2}\subset L^{\infty}$ in $\mathbb{R}^{3}$ .
Considering (\ref{e2}) and the Minkowski inequality in integral form, we have
\begin{align*}
\int_{\mathcal{M}}|\nabla\omega||\partial_{p}\theta||\nabla\Delta\theta|d\mathcal{M}
\leq\|\nabla\omega\|_{L^{6}}\|\partial_{p}\theta\|_{L^{3}}\|\nabla\Delta\theta\|_{L^{2}} \leq C\|\textbf{\emph{v}}\|_{H^{3}}^{2}\|\theta\|_{H^{2}}^{2}+\frac{\mu_{2}}{32}\|\nabla\Delta\theta\|_{L^{2}}^{2},
\end{align*}
and
\begin{align*}
\int_{\mathcal{M}}|\omega||\nabla\partial_{p}\theta||\nabla\Delta\theta|d\mathcal{M}
\leq&\|\omega\|_{L^{\infty}}\|\nabla\partial_{p}\theta\|_{L^{2}}\|\nabla\Delta\theta\|_{L^{2}} \leq C\|\textbf{\emph{v}}\|_{H^{3}}\|\theta\|_{H^{2}}\|\nabla\Delta\theta\|_{L^{2}}\nonumber\\ \leq&C\|\textbf{\emph{v}}\|_{H^{3}}^{2}\|\theta\|_{H^{2}}^{2}+\frac{\mu_{2}}{32}\|\nabla\Delta\theta\|_{L^{2}}^{2}.
\end{align*}
Thus
\begin{align*}
\int_{\mathcal{M}}(\textbf{\emph{v}}\cdot\nabla\theta+\omega\partial_{p}\theta)\Delta^{2}\theta d\mathcal{M}
\leq C\left(\|\textbf{\emph{v}}\|_{H^{2}}^{2}+\|\textbf{\emph{v}}\|_{H^{3}}^{2}\right)\|\theta\|_{H^{2}}^{2}
+\frac{\mu_{2}}{8}\|\nabla\Delta\theta\|_{L^{2}}^{2}.
\end{align*}
Integrating by parts and considering the H\"older inequality, we infer that
\begin{align*}
\int_{\mathcal{M}}f_{\theta}\Delta^{2}\theta d\mathcal{M}\leq C\|f_{\theta}\|_{H^{1}}^{2}+\frac{\mu_{2}}{8}\|\nabla\Delta\theta\|_{L^{2}}^{2}.
\end{align*}
Then combining all the above inequalities, we have
\begin{align}\label{f11}
\frac{1}{2}\frac{d}{dt}\|\Delta\theta\|_{L^{2}}^{2}+&\mu_{2}\frac{3}{4}\left(\|\nabla\Delta\theta\|_{L^{2}}^{2}
+\|\Delta\partial_{p}\theta\|_{L^{2}}^{2}\right)
\leq C\left(\|\textbf{\emph{v}}\|_{H^{2}}^{2}+\|\textbf{\emph{v}}\|_{H^{3}}^{2}\right)\|\theta\|_{H^{2}}^{2}+\nonumber\\ &C\left(\|\nabla\theta\|_{L^{2}}^{2}+\|\nabla\partial_{p}\theta\|_{L^{2}}^{2}+
\|\Delta\theta\|_{L^{2}}^{2}+\|f_{\theta}\|_{H^{1}}^{2}\right)+\frac{\mu_{2}}{4}\|\theta\|_{H^{3}}^{2}.
\end{align}
Thus we can deduce from (\ref{theta-p-H1}) and (\ref{f11}) that
\begin{align*}
\frac{d}{dt}\|\theta\|_{H^{2}}^{2}+\mu_{2}\|\theta\|_{H^{3}}^{2}
\leq& C\left(1+\|\textbf{\emph{v}}\|_{H^{2}}^{2}+\|\textbf{\emph{v}}\|_{H^{3}}^{2}+\|\textbf{\emph{v}}\|_{H^{1}}^{2}\|\textbf{\emph{v}}\|_{H^{2}}^{2}\right)
\|\theta\|_{H^{2}}^{2}\nonumber\\
&+C\left(\|\theta\|_{H^{1}}^{2}+\|\textbf{\emph{v}}_{p}\|_{H^{1}}^{2}\|\theta\|_{H^{2}}^{2}
+\|\nabla\partial_{p}\theta\|_{L^{2}}^{2}+
\|\Delta\theta\|_{L^{2}}^{2}+\|f_{\theta}\|_{H^{1}}^{2}\right).
\end{align*}
Considering regularities of $\textbf{\emph{v}},\theta$ in Lemma \ref{H1 strong existence}, Theorem \ref{theorem1} and Proposition \ref{pro4}, using the Gronwall inequality, we can get the desired regularity of $\theta$.

For $q$, through similar arguments as $\textbf{\emph{v}}$ and $\theta$, we can get
\begin{align*}
q\in L^{\infty}\left(0,t_{1}; H^{2}\right)\cap L^{2}\left(0,t_{1};H^{3}\right)
\end{align*}
for any $t_{1}\in (0,\infty)$.
\end{proof}

At last, we consider the time regularity of solutions.
\begin{proposition}\label{pro6}
Let $\textbf{v}_{0}\in\mathbb{V}\cap (H^{2})^{2},\textbf{v}_{0},\theta_{0},q_{0}\in H^{2}$, $f_{\textbf{v}},f_{\theta},f_{q}\in H^{1}$. Then the solution $(\textbf{v},\theta,q)$ to equations (\ref{e1})-(\ref{e5}) satisfies that
\begin{align*}
(\partial_{t}\textbf{v},\partial_{t}\theta,\partial_{t}q)\in L^{2}\left(0,t_{1}; \mathbb{V}\times(H^{1})^{2}\right)
\end{align*}
for any $t_{1}\in (0,\infty)$.
\end{proposition}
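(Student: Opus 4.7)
The plan is to read off each time derivative from its evolution equation and estimate the resulting expression directly in $H^{1}$, using the spatial regularity accumulated in Lemma \ref{H1 strong existence}, Theorem \ref{theorem1}, and Propositions \ref{pro4}, \ref{pro5}. Since $\textbf{\emph{v}}(t)\in\mathbb{V}$ for every $t$, differentiating the constraint $\nabla\cdot\int_{p_{0}}^{p_{1}}\textbf{\emph{v}}\,dp'=0$ in $t$ places $\partial_{t}\textbf{\emph{v}}(t)$ in $\mathbb{V}$ automatically, so it suffices to show $\partial_{t}\textbf{\emph{v}}\in L^{2}(0,t_{1};(H^{1})^{2})$.

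For the scalar equations I would rewrite (\ref{e4}) as $\partial_{t}\theta=-\mathcal{A}_{\theta}\theta-\textbf{\emph{v}}\cdot\nabla\theta-\omega\partial_{p}\theta+f_{\theta}$ and estimate each piece in $H^{1}$. The viscous term is bounded by $C\|\theta\|_{H^{3}}$, which is $L^{2}_{t}$ by Proposition \ref{pro5}. The horizontal advection is controlled by $C\|\textbf{\emph{v}}\|_{H^{2}}\|\theta\|_{H^{2}}$ via the algebra property of $H^{2}(\mathcal{M})\subset L^{\infty}$ in three dimensions, which is $L^{\infty}_{t}$. The vertical advection exploits $\omega=-\int_{p_{0}}^{p}\nabla\cdot\textbf{\emph{v}}\,dp'$ and the Minkowski integral inequality to give $\|\omega\|_{H^{2}}\leq C\|\textbf{\emph{v}}\|_{H^{3}}$, so $\|\omega\partial_{p}\theta\|_{H^{1}}\leq C\|\textbf{\emph{v}}\|_{H^{3}}\|\theta\|_{H^{2}}$, an $L^{2}_{t}$ quantity. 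Finally $f_{\theta}\in H^{1}$ by hypothesis. An identical argument yields $\partial_{t}q\in L^{2}(0,t_{1};H^{1})$.

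For $\partial_{t}\textbf{\emph{v}}$, equation (\ref{e1}) gives
\begin{equation*}
\partial_{t}\textbf{\emph{v}}=-\mathcal{A}_{\textbf{\emph{v}}}\textbf{\emph{v}}-(\textbf{\emph{v}}\cdot\nabla)\textbf{\emph{v}}-\omega\partial_{p}\textbf{\emph{v}}-\nabla\Phi-f\textbf{\emph{v}}^{\bot}+f_{\textbf{\emph{v}}},
\end{equation*}
and the diffusion, nonlinear, Coriolis and forcing terms are handled exactly as above. The only subtle piece is $\nabla\Phi$. Writing $\Phi=\Phi_{s}(x,y,t)-\int_{p_{1}}^{p}(RT/p')\,dp'$ and converting $T$ to $\theta$ through the change of variables from Section 2, the baroclinic part has $H^{1}$-norm dominated by $C(\|\theta\|_{H^{2}}+1)$, which is $L^{\infty}_{t}$. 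To control the barotropic part $\nabla\Phi_{s}$, I would take the vertical average of (\ref{e1}), use $\omega|_{p_{0},p_{1}}=\partial_{p}\textbf{\emph{v}}|_{p_{0},p_{1}}=0$ and $\nabla\cdot\bar{\textbf{\emph{v}}}=0$, then take a 2D divergence to arrive at a 2D Poisson problem for $\Phi_{s}$ whose source has already been bounded in $L^{2}(0,t_{1};H^{1})$; standard 2D elliptic regularity then yields $\nabla\Phi_{s}\in L^{2}(0,t_{1};H^{1})$.

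The main obstacle is the treatment of the surface geopotential $\Phi_{s}$, which did not surface in Propositions \ref{pro4}-\ref{pro5} because those energy estimates used $\nabla\cdot\bar{\textbf{\emph{v}}}=0$ to kill it by orthogonality; here one genuinely has to recover it from the barotropic equation. All the remaining estimates are routine applications of the Sobolev algebra $H^{2}(\mathcal{M})\subset L^{\infty}$ together with the Minkowski inequality already deployed repeatedly in the paper to handle $\omega$.
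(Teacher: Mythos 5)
Your proposal is correct and follows essentially the same route as the paper: read $\partial_{t}\textbf{\emph{v}}$, $\partial_{t}\theta$, $\partial_{t}q$ off from the equations and bound each term in $L^{2}(0,t_{1};H^{1})$ using the regularity from Lemma \ref{H1 strong existence}, Theorem \ref{theorem1} and Propositions \ref{pro4}--\ref{pro5} together with $H^{2}\subset L^{\infty}$ and the Minkowski inequality for $\omega$. The one place you are more careful than the paper is $\nabla\Phi$: the paper simply declares $\nabla\Phi\in L^{2}(0,t_{1};H^{1})$ "obvious" because it treats $\Phi_{s}$ as prescribed data at $p=p_{1}$, whereas your recovery of $\nabla\Phi_{s}$ from the barotropic 2D Poisson problem is the right argument if $\Phi_{s}$ is regarded as an unknown Lagrange multiplier.
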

\begin{proof}
In order to check the time regularity, we start with the expression of $\partial_{t}\textbf{\emph{v}}$. We can deduce from (\ref{e1}) that
\begin{equation}\label{t-e1}
\partial_{t}\textbf{\emph{v}}=-\mathcal{A}_{\textbf{\emph{v}}}\textbf{\emph{v}}
-(\textbf{\emph{v}}\cdot\nabla)\textbf{\emph{v}}-w\partial_{p}\textbf{\emph{v}}-\nabla\Phi-f \textbf{\emph{v}}^{\bot}+f_{\textbf{\emph{v}}},
\end{equation}
Considering regularities of $\textbf{\emph{v}},\theta$, it is obviously that
\begin{align}
f\textbf{\emph{v}}^{\bot},f_{\textbf{\emph{v}}},\nabla\Phi,\mathcal{A}_{\textbf{\emph{v}}}\textbf{\emph{v}}\in L^{2}(0,t_{1};H^{1}).
\end{align}
Then we verify the most complex term $(\textbf{\emph{v}}\cdot\nabla)\textbf{\emph{v}}+\omega\partial_{p}\textbf{\emph{v}}$.
In fact
\begin{align*}
&\|(\textbf{\emph{v}}\cdot\nabla)\textbf{\emph{v}}\|_{L^{2}(0,t_{1};H^{1})}^{2}
=\int_{0}^{t_{1}}\|(\textbf{\emph{v}}\cdot\nabla)\textbf{\emph{v}}\|_{H^{1}}^{2}ds\nonumber\\
&\leq\int_{0}^{t_{1}}\|(\textbf{\emph{v}}\cdot\nabla)\textbf{\emph{v}}\|_{L^{2}}^{2}ds
+\int_{0}^{t_{1}}\|\nabla\left((\textbf{\emph{v}}\cdot\nabla)\textbf{\emph{v}}\right)\|_{L^{2}}^{2}ds
+\int_{0}^{t_{1}}\|\partial_{p}\left((\textbf{\emph{v}}\cdot\nabla)\textbf{\emph{v}}\right)\|_{L^{2}}^{2}ds.
\end{align*}
Utilizing the H\"older inequality and the embedding relation $H^{2}\subset L^{\infty}$, considering the regularity results in Lemma \ref{H1 strong existence} and Proposition \ref{pro4}, we can deduce that
\begin{align*}
\int_{0}^{t_{1}}\|(\textbf{\emph{v}}\cdot\nabla)\textbf{\emph{v}}\|_{L^{2}}^{2}ds
\leq\int_{0}^{t_{1}}\|\textbf{\emph{v}}\|_{L^{\infty}}^{2}\|\nabla\textbf{\emph{v}}\|_{L^{2}}^{2}ds
\leq Ct_{1}\|\textbf{\emph{v}}\|_{L^{\infty}(0,t_{1};H^{2})}^{2}\|\textbf{\emph{v}}\|_{_{L^{\infty}(0,t_{1};H^{1})}}^{2}<\infty,
\end{align*}
\begin{align*}
&\int_{0}^{t_{1}}\|\nabla\left((\textbf{\emph{v}}\cdot\nabla)\textbf{\emph{v}}\right)\|_{L^{2}}^{2}ds
\leq\int_{0}^{t_{1}}\int_{
\mathcal{M}}|\nabla\textbf{\emph{v}}|^{4}d\mathcal{M}ds
+\int_{0}^{t_{1}}\int_{\mathcal{M}}|\textbf{\emph{v}}|^{2}|\Delta\textbf{\emph{v}}|^{2}d\mathcal{M}ds\nonumber\\
&\leq\int_{0}^{t_{1}}\|\nabla\textbf{\emph{v}}\|_{L^{\infty}}^{2}\|\nabla\textbf{\emph{v}}\|_{L^{2}}^{2}ds
+\int_{0}^{t_{1}}\|\textbf{\emph{v}}\|_{L^{\infty}}^{2}\|\Delta\textbf{\emph{v}}\|_{L^{2}}^{2}ds\nonumber\\
&\leq C\|\textbf{\emph{v}}\|_{L^{2}(0,t_{1};H^{3})}^{2}\|\textbf{\emph{v}}\|_{L^{\infty}(0,t_{1};H^{2})}^{2}
+Ct_{1}\|\textbf{\emph{v}}\|_{L^{\infty}(0,t_{1};H^{2})}^{4}<\infty,
\end{align*}
and
\begin{align*}
&\int_{0}^{t_{1}}\|\partial_{p}\left((\textbf{\emph{v}}\cdot\nabla)\textbf{\emph{v}}\right)\|_{L^{2}}^{2}ds
\leq\int_{0}^{t_{1}}\int_{\mathcal{M}}|\partial_{p}\textbf{\emph{v}}|^{2}|\nabla\textbf{\emph{v}}|^{2}d\mathcal{M}ds
+\int_{0}^{t_{1}}\int_{\mathcal{M}}|\textbf{\emph{v}}|^{2}|\nabla\partial_{p}\textbf{\emph{v}}|^{2}d\mathcal{M}ds\nonumber\\
&\leq\int_{0}^{t_{1}}\|\partial_{p}\textbf{\emph{v}}\|_{L^{\infty}}^{2}\|\nabla\textbf{\emph{v}}\|_{L^{2}}^{2}ds
+\int_{0}^{t_{1}}\|\textbf{\emph{v}}\|_{L^{\infty}}^{2}\|\nabla\partial_{p}\textbf{\emph{v}}\|_{L^{2}}^{2}ds\nonumber\\
&\leq C\|\textbf{\emph{v}}\|_{L^{2}(0,t_{1};H^{3})}^{2}\|\textbf{\emph{v}}\|_{L^{\infty}(0,t_{1};H^{1})}^{2}
+Ct_{1}\|\textbf{\emph{v}}\|_{L^{\infty}(0,t_{1};H^{2})}^{4}<\infty.
\end{align*}
Thus
\begin{align}
(\textbf{\emph{v}}\cdot\nabla)\textbf{\emph{v}}\in L^{2}(0,t_{1};H^{1}).
\end{align}
Similarly,
\begin{align*}
&\|\omega\partial_{p}\textbf{\emph{v}}\|_{L^{2}(0,t_{1};H^{1})}
\leq\int_{0}^{t_{1}}\|\omega\partial_{p}\textbf{\emph{v}}\|_{L^{2}}^{2}ds
+\int_{0}^{t_{1}}\|\nabla\left(\omega\partial_{p}\textbf{\emph{v}}\right)\|_{L^{2}}^{2}ds
+\int_{0}^{t_{1}}\|\partial_{p}\left(\omega\partial_{p}\textbf{\emph{v}}\right)\|_{L^{2}}^{2}ds.
\end{align*}
Considering the relation between $\omega$ and $\textbf{\emph{v}}$, we have
\begin{align*}
\int_{0}^{t_{1}}\|\omega\partial_{p}\textbf{\emph{v}}\|_{L^{2}}^{2}ds
\leq\int_{0}^{t_{1}}\|\partial_{p}\textbf{\emph{v}}\|_{L^{\infty}}^{2}\|\omega\|_{L^{2}}^{2}ds
\leq\|\textbf{\emph{v}}\|_{L^{2}(0,t_{1};H^{3})}^{2}\|\textbf{\emph{v}}\|_{L^{\infty}(0,t_{1};H^{1})}^{2}
<\infty,
\end{align*}
\begin{align*}
&\int_{0}^{t_{1}}\|\nabla\left(\omega\partial_{p}\textbf{\emph{v}}\right)\|_{L^{2}}^{2}ds
\leq\int_{0}^{t_{1}}\int_{\mathcal{M}}|\nabla\omega|^{2}|\partial_{p}\textbf{\emph{v}}|^{2}d\mathcal{M}ds
+\int_{0}^{t_{1}}\int_{\mathcal{M}}|\omega|^{2}|\nabla\partial_{p}\textbf{\emph{v}}|^{2}d\mathcal{M}ds\nonumber\\
&\leq\int_{0}^{t_{1}}\|\nabla\omega\|_{L^{2}}^{2}\|\partial_{p}\textbf{\emph{v}}\|_{L^{\infty}}^{2}ds
+\int_{0}^{t_{1}}\|\omega\|_{L^{\infty}}^{2}\|\nabla\partial_{p}\textbf{\emph{v}}\|_{L^{2}}^{2}ds\nonumber\\
&\leq\int_{0}^{t_{1}}\|\textbf{\emph{v}}\|_{H^{2}}^{2}\|\textbf{\emph{v}}\|_{H^{3}}^{2}ds
+\int_{0}^{t_{1}}\|\textbf{\emph{v}}\|_{H^{3}}^{2}\|\textbf{\emph{v}}\|_{H^{2}}^{2}ds\nonumber\\
&\leq\|\textbf{\emph{v}}\|_{L^{\infty}(0,t_{1};H^{2})}^{2}\|\textbf{\emph{v}}\|_{L^{2}(0,t_{1};H^{3})}^{2}<\infty,
\end{align*}
and
\begin{align*}
&\int_{0}^{t_{1}}\|\partial_{p}\left(\omega\partial_{p}\textbf{\emph{v}}\right)\|_{L^{2}}^{2}ds
\leq\int_{0}^{t_{1}}\int_{\mathcal{M}}|\partial_{p}\omega|^{2}|\partial_{p}\textbf{\emph{v}}|^{2}d\mathcal{M}ds
+\int_{0}^{t_{1}}\int_{\mathcal{M}}|\omega|^{2}|\partial_{p}^{2}\textbf{\emph{v}}|^{2}d\mathcal{M}ds\nonumber\\
&\leq\int_{0}^{t_{1}}\|\partial_{p}\omega\|_{L^{2}}^{2}\|\partial_{p}\textbf{\emph{v}}\|_{L^{\infty}}^{2}ds
+\int_{0}^{t_{1}}\|\omega\|_{L^{\infty}}^{2}\|\partial_{p}^{2}\textbf{\emph{v}}\|_{L^{2}}^{2}ds\nonumber\\
&\leq C\|\textbf{\emph{v}}\|_{L^{\infty}(0,t_{1};H^{1})}^{2}\|\textbf{\emph{v}}\|_{L^{2}(0,t_{1};H^{3})}^{2}
+C\|\textbf{\emph{v}}\|_{L^{\infty}(0,t_{1};H^{2})}^{2}\|\textbf{\emph{v}}\|_{L^{2}(0,t_{1};H^{3})}^{2}
<\infty.
\end{align*}
Thus
\begin{align*}
\|\omega\partial_{p}\textbf{\emph{v}}\|_{L^{2}(0,t_{1};H^{1})}<\infty.
\end{align*}
Combining all the above calculations, we can deduce that $\partial_{t}\textbf{\emph{v}}\in L^{2}(0,t_{1};H^{1})$.

Next, we consider the regularity of time derivation of $\theta$ and $q$. Here we only show the calculation about $q$. $\theta$ can be dealed with almost the same procedure. We can deduce from (\ref{e5}) that
\begin{equation*}
\partial_{t}q=-\mathcal{A}_{q}q-\textbf{\emph{v}}\cdot\nabla q-w\partial_{p}q+f_{q}.
\end{equation*}
It is obviously that $f_{q}\in L^{2}(0,t_{1};H^{1})$ and
\begin{align*}
\|\mathcal{A}_{q}q\|_{L^{2}(0,t_{1};H^{1})}\leq C\|q\|_{L^{2}(0,t_{1};H^{3})}<\infty.
\end{align*}
Using the definition of $\|\cdot\|_{H^{1}}$ norm, we have
\begin{align*}
&\|\textbf{\emph{v}}\cdot\nabla q\|_{L^{2}(0,t_{1};H^{1})}^{2}
=\int_{0}^{t_{1}}\|\textbf{\emph{v}}\cdot\nabla q\|_{H^{1}}^{2}ds\nonumber\\
&\leq\int_{0}^{t_{1}}\|\textbf{\emph{v}}\cdot\nabla q\|_{L^{2}}^{2}ds
+\int_{0}^{t_{1}}\|\nabla\left(\textbf{\emph{v}}\cdot\nabla q\right)\|_{L^{2}}^{2}ds
+\int_{0}^{t_{1}}\|\partial_{p}\left(\textbf{\emph{v}}\cdot\nabla q\right)\|_{L^{2}}^{2}ds.
\end{align*}
Using the H\"older inequality, we can obtain that
\begin{align*}
\int_{0}^{t_{1}}\|\textbf{\emph{v}}\cdot\nabla q\|_{L^{2}}^{2}ds
\leq\int_{0}^{t_{1}}\|\textbf{\emph{v}}\|_{L^{\infty}}^{2}\|\nabla q\|_{L^{2}}^{2}ds
\leq Ct_{1}\|\textbf{\emph{v}}\|_{L^{\infty}(0,t_{1};H^{2})}^{2}\| q\|_{L^{\infty}(0,t_{1};H^{1})}^{2}<\infty,
\end{align*}
\begin{align*}
&\int_{0}^{t_{1}}\|\nabla\left(\textbf{\emph{v}}\cdot\nabla q\right)\|_{L^{2}}^{2}ds
\leq\int_{0}^{t_{1}}\|\nabla\textbf{\emph{v}}\|_{L^{\infty}}^{2}\|\nabla q\|_{L^{2}}^{2}ds+\int_{0}^{t_{1}}\|\textbf{\emph{v}}\|_{L^{\infty}}^{2}\|\Delta q\|_{L^{2}}^{2}ds\nonumber\\
&\leq C\|\textbf{\emph{v}}\|_{L^{2}(0,t_{1};H^{3})}^{2}\| q\|_{L^{\infty}(0,t_{1};H^{1})}^{2}+Ct_{1}\|\textbf{\emph{v}}\|_{L^{\infty}(0,t_{1};H^{2})}^{2}\| q\|_{L^{\infty}(0,t_{1};H^{2})}^{2}<\infty,
\end{align*}
and
\begin{align*}
&\int_{0}^{t_{1}}\|\partial_{p}\left(\textbf{\emph{v}}\cdot\nabla q\right)\|_{L^{2}}^{2}ds
\leq\int_{0}^{t_{1}}\|\partial_{p}\textbf{\emph{v}}\|_{L^{\infty}}^{2}\|\nabla q\|_{L^{2}}^{2}ds+\int_{0}^{t_{1}}\|\textbf{\emph{v}}\|_{L^{\infty}}^{2}\|\nabla\partial_{p} q\|_{L^{2}}^{2}ds\nonumber\\
&\leq C\|\textbf{\emph{v}}\|_{L^{2}(0,t_{1};H^{3})}^{2}\| q\|_{L^{\infty}(0,t_{1};H^{1})}^{2}+Ct_{1}\|\textbf{\emph{v}}\|_{L^{\infty}(0,t_{1};H^{2})}^{2}\| q\|_{L^{\infty}(0,t_{1};H^{2})}^{2}<\infty.
\end{align*}
Thus we have $\textbf{\emph{v}}\cdot\nabla q\in L^{2}(0,t_{1};H^{1}).$
Similarly,
\begin{align*}
\|\omega\partial_{p}q\|_{L^{2}(0,t_{1};H^{1})}^{2}
\leq& C\|\textbf{\emph{v}}\|_{L^{2}(0,t_{1};H^{3})}^{2}\| q\|_{L^{\infty}(0,t_{1};H^{1})}^{2}+C\|\textbf{\emph{v}}\|_{L^{\infty}(0,t_{1};H^{2})}^{2}\| q\|_{L^{2}(0,t_{1};H^{3})}^{2}\nonumber\\
&+C\|\textbf{\emph{v}}\|_{L^{2}(0,t_{1};H^{3})}^{2}\| q\|_{L^{\infty}(0,t_{1};H^{2})}^{2}<\infty,
\end{align*}
where in the fifth step we have used the Minkowski inequality in integral form. Thus we can deduce that $q\in L^{2}(0,t_{1};H^{1})$.
\end{proof}

\textbf{Proof of Theorem 2.4}
Combining results in Propositions \ref{pro4}-\ref{pro6} and Theorem \ref{theorem1}, we can complete the proof.

%The authors express sincere thanks to the editors and anonymous reviewers for their constructive comments
%and suggestions that helped to improve this paper.

\end{document}